\begin{document}

\newtheorem{theorem}[subsection]{Theorem}
\newtheorem{proposition}[subsection]{Proposition}
\newtheorem{lemma}[subsection]{Lemma}
\newtheorem{corollary}[subsection]{Corollary}
\newtheorem{conjecture}[subsection]{Conjecture}
\newtheorem{prop}[subsection]{Proposition}
\newtheorem{defin}[subsection]{Definition}

\numberwithin{equation}{section}
\newcommand{\mr}{\ensuremath{\mathbb R}}
\newcommand{\mc}{\ensuremath{\mathbb C}}
\newcommand{\dif}{\mathrm{d}}
\newcommand{\intz}{\mathbb{Z}}
\newcommand{\ratq}{\mathbb{Q}}
\newcommand{\natn}{\mathbb{N}}
\newcommand{\comc}{\mathbb{C}}
\newcommand{\rear}{\mathbb{R}}
\newcommand{\prip}{\mathbb{P}}
\newcommand{\uph}{\mathbb{H}}
\newcommand{\fief}{\mathbb{F}}
\newcommand{\majorarc}{\mathfrak{M}}
\newcommand{\minorarc}{\mathfrak{m}}
\newcommand{\sings}{\mathfrak{S}}
\newcommand{\fA}{\ensuremath{\mathfrak A}}
\newcommand{\mn}{\ensuremath{\mathbb N}}
\newcommand{\mq}{\ensuremath{\mathbb Q}}
\newcommand{\half}{\tfrac{1}{2}}
\newcommand{\f}{f\times \chi}
\newcommand{\summ}{\mathop{{\sum}^{\star}}}
\newcommand{\chiq}{\chi \bmod q}
\newcommand{\chidb}{\chi \bmod db}
\newcommand{\chid}{\chi \bmod d}
\newcommand{\sym}{\text{sym}^2}
\newcommand{\hhalf}{\tfrac{1}{2}}
\newcommand{\sumstar}{\sideset{}{^*}\sum}
\newcommand{\sumprime}{\sideset{}{'}\sum}
\newcommand{\sumprimeprime}{\sideset{}{''}\sum}
\newcommand{\sumflat}{\sideset{}{^\flat}\sum}
\newcommand{\shortmod}{\ensuremath{\negthickspace \negthickspace \negthickspace \pmod}}
\newcommand{\V}{V\left(\frac{nm}{q^2}\right)}
\newcommand{\sumi}{\mathop{{\sum}^{\dagger}}}
\newcommand{\mz}{\ensuremath{\mathbb Z}}
\newcommand{\leg}[2]{\left(\frac{#1}{#2}\right)}
\newcommand{\muK}{\mu_{\omega}}
\newcommand{\thalf}{\tfrac12}
\newcommand{\lp}{\left(}
\newcommand{\rp}{\right)}
\newcommand{\Lam}{\Lambda_{[i]}}
\newcommand{\lam}{\lambda}
\newcommand{\af}{\mathfrak{a}}
\newcommand{\sw}{S_{[i]}(X,Y;\Phi,\Psi)}
\newcommand{\lz}{\left(}
\newcommand{\pz}{\right)}
\newcommand{\bfrac}[2]{\lz\frac{#1}{#2}\pz}
\newcommand{\odd}{\mathrm{\ primary}}
\newcommand{\even}{\text{ even}}
\newcommand{\res}{\mathrm{Res}}
\newcommand{\sumn}{\sumstar_{(c,1+i)=1}  w\left( \frac {N(c)}X \right)}
\newcommand{\lab}{\left|}
\newcommand{\rab}{\right|}
\newcommand{\Go}{\Gamma_{o}}
\newcommand{\Ge}{\Gamma_{e}}
\newcommand{\M}{\widehat}

\theoremstyle{plain}
\newtheorem{conj}{Conjecture}
\newtheorem{remark}[subsection]{Remark}

\makeatletter
\def\widebreve{\mathpalette\wide@breve}
\def\wide@breve#1#2{\sbox\z@{$#1#2$}%
     \mathop{\vbox{\m@th\ialign{##\crcr
\kern0.08em\brevefill#1{0.8\wd\z@}\crcr\noalign{\nointerlineskip}%
                    $\hss#1#2\hss$\crcr}}}\limits}
\def\brevefill#1#2{$\m@th\sbox\tw@{$#1($}%
  \hss\resizebox{#2}{\wd\tw@}{\rotatebox[origin=c]{90}{\upshape(}}\hss$}
\makeatletter

\title[Ratios conjecture for quadratic Hecke $L$-functions in the Gaussian field]{Ratios conjecture for quadratic Hecke $L$-functions in the Gaussian field}

\author[P. Gao]{Peng Gao}
\address{School of Mathematical Sciences, Beihang University, Beijing 100191, China}
\email{penggao@buaa.edu.cn}

\author[L. Zhao]{Liangyi Zhao}
\address{School of Mathematics and Statistics, University of New South Wales, Sydney NSW 2052, Australia}
\email{l.zhao@unsw.edu.au}

\begin{abstract}
 We develope the $L$-functions ratios conjecture with one shift in the numerator
and denominator in certain ranges for the family of quadratic Hecke $L$-functions in the Gaussian field using multiple
Dirichlet series under the generalized Riemann
hypothesis. We also obtain an asymptotical formula for the first moment of central values of the same family of $L$-functions, obtaining an error term of size $O(X^{1/2+\varepsilon})$.
\end{abstract}

\maketitle

\noindent {\bf Mathematics Subject Classification (2010)}: 11M06, 11M41  \newline

\noindent {\bf Keywords}:  ratios conjecture, mean values, quadratic Hecke $L$-functions

\section{Introduction}\label{sec 1}

  The $L$-functions ratios conjecture formulated in \cite[Section 5]{CFZ} by J. B. Conrey, D. W. Farmer and M. R. Zirnbauer gives a general recipe to predict both the main and lower-order terms in the asymptotic formulas for the sum of ratios of products of shifted
$L$-functions. This conjecture has been applied to study a wide variety of important problems such as the density conjecture of N. Katz
and P. Sarnak \cites{KS1, K&S} on the distribution of zeros near the central point of a family of $L$-functions, the mollified moments of $L$-functions, the discrete moments of the Riemann zeta function and its derivatives. A detailed description of these applications can be found in \cite{CS}. \newline

  Some results can be found in the literature concerning the ratios conjecture. For certain ranges of parameters, the ratios conjecture for quadratic $L$-functions was established by H. M. Bui, A. Florea and J. P. Keating \cite{BFK21} over function fields and by  M. \v Cech \cite{Cech1} over $\mq$ by further assuming the generalized Riemann hypothesis (GRH). \newline

  The results of \v Cech \cite{Cech1} were obtained by utilizing the method of double Dirichlet series, a powerful tool that has been
previous deployed to investigate related issues such as moments of central values of families of $L$-functions.  In this paper, we further apply this approach to study the ratios conjecture for quadratic Hecke $L$-functions over the Gaussian field. To state our result, we write $K=\mq(i)$ for the Gaussian field and $\mathcal{O}_K=\mz[i]$ for its ring of integers throughout the paper.  Further $N(n)$ denotes the norm of any $n \in K$ and $\zeta_K(s)$ the Dedekind zeta function of $K$. It is shown in Section \ref{sec2.4} below that every ideal in $\mathcal{O}_K$ co-prime to $2$ has a unique generator congruent to $1$ modulo $(1+i)^3$ which is called primary. \newline

 Let $\chi_m$ be the quadratic symbol $\left(\frac{m}{\cdot} \right)$ defined in Section \ref{sec2.4}, which can be viewed as an analogue in $K$ to the Kronecker symbol. As $\chi_m$ equals $1$ on the group of units of $K$,  we may regard it as a quadratic Hecke character of trivial infinite type and denote the associated $L$-function by $L(s, \chi_m)$.  Furthermore, we use the notation $L^{(c)}(s, \chi_m)$ for the Euler product defining $L(s, \chi_m)$ but omitting those primes dividing $c$. \newline
	
We first establish a result concerning the ratios conjecture with one shift in the numerator
and denominator for the family of Hecke $L$-functions averaged over all quadratic Hecke characters.
\begin{theorem}
\label{Theorem for all characters}
		Assume the truth of GRH. Let $w(t)$ be a non-negative Schwartz function and let $\hat w(s)$ be its Mellin transform.  For any $\varepsilon>0$, $1/2>\Re(\alpha)>0$ and $\Re(\beta)>\varepsilon$, we have
\begin{align}
\label{Asymptotic for ratios of all characters}
\begin{split}	
&\sum_{n\odd}  \frac{L(\tfrac{1}{2}+\alpha,\chi_{(1+i)^2n})}{L(\tfrac{1}{2}+\beta,\chi_{(1+i)^2n})} w\left( \frac {N(c)}X \right) \\
 &=  X\M w(1)\frac{\pi \zeta_K^{(2)}(1+2\alpha)}{8\zeta_K^{(2)}(1+\alpha+\beta)}
\prod_{(\varpi,2)=1}\lz1+\frac{N(\varpi)^{\alpha-\beta}-1}{N(\varpi)^{1+\alpha-\beta}(N(\varpi)^{1+\alpha+\beta}-1)}\pz\\
& \hspace*{0.5cm}  +X^{1-\alpha}\M w(1-\alpha)\frac{\pi^{2\alpha+1}\Gamma(1-2\alpha)\Gamma (\alpha)}{\Gamma (1-\alpha)\Gamma (2\alpha)}\cdot\frac{P( \tfrac{3}{2}-\alpha+\beta)\zeta_K(1-2\alpha)}{\zeta_K(2)\zeta_K(1-\alpha+\beta)}\cdot\frac{2^{\alpha+\beta-2}}{3\cdot 2^{1-\alpha+\beta}-2} +O\lz(1+|\alpha|)^\varepsilon|\beta|^\varepsilon X^{N(\alpha,\beta)+\varepsilon}\pz,
\end{split}
\end{align}
 where
\begin{equation}
\label{Pz}
		P(z)=\prod_{\varpi}\lz1+\frac1{\lz (N(\varpi)^{z-1/2}-1\pz\lz N(\varpi)+1\pz}\pz,
\end{equation} 	
   and
\begin{equation}
\label{Nab}
			N(\alpha,\beta)=\max\left\{1-2\Re(\alpha),1-2\Re(\beta) \right\}.
\end{equation}
\end{theorem}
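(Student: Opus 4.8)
The plan is to follow the multiple Dirichlet series method of \v Cech \cite{Cech1}, expressing the left-hand side as a contour integral of a Dirichlet series that admits meromorphic continuation, and then shifting the contour past its poles to read off the two main terms. First I would detach the smooth weight from the arithmetic by Mellin inversion: writing $w(t)=\frac{1}{2\pi i}\int_{(c_0)}\M w(s)\,t^{-s}\,\dif s$ for a suitable $c_0>1$, the left-hand side of \eqref{Asymptotic for ratios of all characters} equals
\[
\frac{1}{2\pi i}\int_{(c_0)}\M w(s)\,X^{s}\,A(s;\alpha,\beta)\,\dif s,
\]
where $A(s;\alpha,\beta)=\sum_{n\odd}N(n)^{-s}L(\tfrac12+\alpha,\chi_{(1+i)^2n})/L(\tfrac12+\beta,\chi_{(1+i)^2n})$. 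For $\Re(s)$ large I would expand both factors into Dirichlet series, writing $L(\tfrac12+\alpha,\chi)=\sum_{a}\chi(a)N(a)^{-1/2-\alpha}$ and, via the Euler product, $L(\tfrac12+\beta,\chi)^{-1}=\sum_{b}\mu_{[i]}(b)\chi(b)N(b)^{-1/2-\beta}$ with $\mu_{[i]}$ the M\"obius function on $\mathcal O_K$. This turns $A$ into a sum over $a,b,n$ whose character piece is $\chi_{(1+i)^2n}(ab)$.

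Next I would interchange the order of summation so as to carry out the $n$-sum first. The inner sum $\sum_{n\odd}\chi_{(1+i)^2n}(ab)N(n)^{-s}$ is handled by the quadratic reciprocity law in $\mz[i]$ recorded in Section~\ref{sec2.4}, which converts $\chi_{(1+i)^2n}(ab)$ into $\chi_{ab}(n)$ up to an explicit finite collection of unit and $(1+i)$-adic factors. The resulting $n$-sum is then essentially the Hecke $L$-function $L^{(c)}(s,\chi_{ab})$ with the primes dividing $2ab$ removed, which is entire in $s$ unless $ab$ is a perfect square, in which case the principal character contributes a factor of $\zeta_K(s)$ and hence a simple pole at $s=1$. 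I expect that isolating the square terms $ab=a'^2b'^2$ and summing the attached local factors over $a,b$ reproduces precisely the Euler product in the first line of \eqref{Asymptotic for ratios of all characters}, carrying its pole at $s=1$.

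The crux is the analytic continuation of $A(s;\alpha,\beta)$ to the left of $\Re(s)=1$, together with the location and residue of a second pole. Here I would feed in the functional equation for the Hecke $L$-functions $L(s,\chi_{ab})$, whose gamma-factor and root-number data should be responsible for the factor $\pi^{2\alpha+1}\Gamma(1-2\alpha)\Gamma(\alpha)/(\Gamma(1-\alpha)\Gamma(2\alpha))$ and the power of $2$ in the second main term. Reassembling the dual sum produced by the functional equation as a new multiple Dirichlet series, I anticipate a pole at $s=1-\alpha$; computing its residue should yield the second main term $X^{1-\alpha}\M w(1-\alpha)(\cdots)$, with the arithmetic factor $P(\tfrac32-\alpha+\beta)\zeta_K(1-2\alpha)/(\zeta_K(2)\zeta_K(1-\alpha+\beta))$ emerging from summing the transformed local factors over $a,b$.

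With this continuation in hand, I would move the contour from $\Re(s)=c_0$ down to $\Re(s)=N(\alpha,\beta)+\varepsilon$, picking up the residues at $s=1$ and $s=1-\alpha$ as the two main terms, and then bound the remaining integral. The main obstacle, and the only place GRH is genuinely needed, is this final bound: controlling $A(s;\alpha,\beta)$ on the line $\Re(s)=N(\alpha,\beta)+\varepsilon$—in particular taming the reciprocal factor arising from $1/L(\tfrac12+\beta,\chi)$—requires Lindel\"of-on-average estimates for $L(\tfrac12+\alpha,\chi_m)$ and for $L(\tfrac12+\beta,\chi_m)^{-1}$, which follow from GRH and produce the error term $O((1+|\alpha|)^\varepsilon|\beta|^\varepsilon X^{N(\alpha,\beta)+\varepsilon})$. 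Tracking the dependence on $\alpha$ and $\beta$ through the gamma factors and the continued series, so as to obtain the stated polynomial growth in $|\alpha|$ and $|\beta|$, is the most delicate bookkeeping in the argument.
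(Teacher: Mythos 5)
Your proposal follows essentially the same route as the paper's proof: Mellin inversion to a multiple Dirichlet series $A(s,w,z)$, expansion of numerator and denominator with quadratic reciprocity so the $n$-sum becomes $L^{(2)}(s,\chi_{mk})$, the pole at $s=1$ coming from the square terms $mk=\square$, a functional-equation continuation whose dual (Gauss-sum) Dirichlet series produces the pole at $s=1-\alpha$, and finally a contour shift to $\Re(s)=N(\alpha,\beta)+\varepsilon$ with GRH taming the factors $1/L(\tfrac12+\beta,\chi)$. The one step you leave implicit is the crux of the execution: since $\chi_{mk}$ is generally imprimitive, the "functional equation for $L(s,\chi_{ab})$" you invoke must be the version valid for all non-square moduli (the paper's Proposition \ref{Functional equation with Gauss sums}, obtained via Poisson summation in $\mathcal{O}_K$), which is precisely the dual-sum reassembly you describe, so your plan matches the paper's argument in substance.
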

	
   The above result is consistent with the prediction from the ratios conjecture on the left-hand side of \eqref{Asymptotic for ratios of all characters}, which can be derived following the treatments given in \cite[Section 5]{G&Zhao2022-2}. The only difference is that the ratios conjecture asserts that \eqref{Asymptotic for ratios of all characters} holds uniformly for $|\Re(\alpha)|< 1/4$, $(\log X)^{-1} \ll \Re(\beta) < 1/4$ and $\Im(\alpha), \Im(\beta) \ll X^{1-\varepsilon}$ with an error term $O(X^{1/2+\varepsilon})$. Our result here has the advantage that there is no constraint on imaginary parts of $\alpha$ and $\beta$. \newline

  We shall establish Theorem \ref{Theorem for all characters} following the line of approach in the proof of \cite[Theorem 1.2]{Cech1}. A key ingredient involved is a functional equation of $L$-functions attached to the general quadratic Hecke character given in Proposition \ref{Functional equation with Gauss sums}. \newline
	
  As an application of Theorem \ref{Theorem for all characters}, we note that the case $\beta \rightarrow \infty$ of \eqref{Asymptotic for ratios of all characters} leads to a result concerning the smoothed first moment of values of quadratic Hecke $L$-functions except for a large error term. We shall in fact use the method in this paper to achieve the following valid asymptotic formula in Section \ref{sec4} unconditionally.
\begin{theorem}
\label{Thmfirstmoment}
		Let $w(t)$ be the same as in Theorem~\ref{Theorem for all characters}.  For $1/2>\Re(\alpha)>0$, we have, for any $\varepsilon>0$,
\begin{align}
\label{Asymptotic for first moment}
\begin{split}			
\sum_{n\odd} & L(\tfrac 12+\alpha,\chi_{(1+i)^2n})w\left( \frac {N(c)}X \right) \\
=& X\M w(1)\frac{\pi \zeta_K^{(2)}(1+2\alpha)}{8\zeta_K^{(2)}(2+2\alpha)}+X^{1-\alpha}\M w(1-\alpha)\frac{2^{2\alpha-1} \pi^{2\alpha+1}\Gamma(1-2\alpha)\Gamma (\alpha)}{3\Gamma (1-\alpha)\Gamma (2\alpha)}\cdot\frac{\zeta_K(1-2\alpha)}{\zeta_K(2)}+O\lz X^{1/2+\varepsilon}\pz.
\end{split}
\end{align}
\end{theorem}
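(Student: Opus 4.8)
\emph{Proof sketch.} The plan is to encode the smoothed first moment as a single contour integral of a double Dirichlet series in an auxiliary variable and then to read off the two main terms as residues. First I would apply Mellin inversion to the weight, writing the left-hand side of \eqref{Asymptotic for first moment} as
\[
\frac{1}{2\pi i}\int_{(c)}\M w(s)\,X^{s}\,Z(\alpha,s)\,ds,\qquad Z(\alpha,s):=\sum_{n\odd}\frac{L(\tfrac12+\alpha,\chi_{(1+i)^2n})}{N(n)^{s}},
\]
with $c=\Re(s)$ chosen large enough for absolute convergence. Everything then reduces to the meromorphic continuation of $Z(\alpha,s)$ into the half-plane $\Re(s)\ge\tfrac12+\varepsilon$, together with polynomial control in $\Im(s)$.

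To continue $Z(\alpha,s)$ I would expand $L(\tfrac12+\alpha,\chi_{(1+i)^2n})$ as a Dirichlet series over primary $a$ and interchange the two summations, so that the inner sum becomes $\sum_{n\odd}\chi_{(1+i)^2n}(a)N(n)^{-s}$. Quadratic reciprocity in $K$, together with the primary normalisation of Section~\ref{sec2.4} and the explicit Gauss sums, turns this into a Hecke $L$-function $L(s,\chi_a)$ up to elementary factors at the prime $(1+i)$. When $a$ is a perfect square the relevant character is principal, the inner sum is essentially $\zeta_K^{(2)}(s)$, and one gets a simple pole at $s=1$; when $a$ is not a square the associated $L$-function is entire and only contributes to the continuation. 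The second analytic ingredient is the functional equation of Proposition~\ref{Functional equation with Gauss sums}, which I would apply to produce the dual double Dirichlet series. This dual series carries an extra factor $N(n)^{-\alpha}$ from the conductor, together with the archimedean and root-number factors, so its perfect-square part contributes a second simple pole, now displaced to $s=1-\alpha$.

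Next I would move the contour from $\Re(s)=c$ to $\Re(s)=\tfrac12+\varepsilon$, crossing precisely the poles at $s=1$ and $s=1-\alpha$. The residue at $s=1$ gives the leading term: the residue of $\zeta_K^{(2)}(s)$ at $s=1$ equals $\pi/8$ (the factor $\pi/4$ being the residue of $\zeta_K$ at $1$, halved by the Euler factor $1-2^{-s}$ coming from the restriction to primary $n$), while summing the square contribution $a=b^2$ with the coprimality weighting yields the Euler product $\zeta_K^{(2)}(1+2\alpha)/\zeta_K^{(2)}(2+2\alpha)$; together these reproduce $X\M w(1)\,\pi\zeta_K^{(2)}(1+2\alpha)/(8\zeta_K^{(2)}(2+2\alpha))$. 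The residue at $s=1-\alpha$ produces $X^{1-\alpha}\M w(1-\alpha)$ times the displayed constant, in which the ratio $\Gamma(1-2\alpha)\Gamma(\alpha)/(\Gamma(1-\alpha)\Gamma(2\alpha))$ and the power $\pi^{2\alpha+1}$ come from the archimedean factors of the functional equation, $\zeta_K(1-2\alpha)$ from the dual square-sum $\sum_b N(b)^{-(1-2\alpha)}$, and $2^{2\alpha-1}/3$ and $1/\zeta_K(2)$ from the local computation at $(1+i)$. Matching these local factors to the stated constants is a careful but routine bookkeeping.

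The main obstacle is to bound the shifted integral on $\Re(s)=\tfrac12+\varepsilon$ by $O(X^{1/2+\varepsilon})$ unconditionally, in contrast to Theorem~\ref{Theorem for all characters} where GRH is used to handle the $L$-function in the denominator. For the first moment no such denominator is present, so the task is purely to exhibit polynomial growth of $Z(\alpha,s)$ on this line. Here the exact functional equation with Gauss sums, rather than an approximate one, does the heavy lifting: it lets one control the non-square contribution $\sum_a L(s,\chi_a)N(a)^{-1/2-\alpha}$ through the size of the associated Gauss sums combined with convexity and mean-value (large-sieve) bounds for the Hecke $L$-functions $L(s,\chi_a)$, all of which are available without GRH. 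The remaining technical burden is to show that no further main term is produced in the range $1/2>\Re(\alpha)>0$ and to keep the estimates uniform, with the stated $\varepsilon$, in $\alpha$.
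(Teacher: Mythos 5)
Your proposal is correct and follows essentially the same route as the paper: Mellin inversion to a contour integral of the double Dirichlet series $A(s,\tfrac12+\alpha)=\sum_{n\,\mathrm{primary}}L(\tfrac12+\alpha,\chi_{(1+i)^2n})N(n)^{-s}$, the swap of summation via quadratic reciprocity, splitting off the square terms (pole at $s=1$), applying the Gauss-sum functional equation of Proposition \ref{Functional equation with Gauss sums} to the non-square part to locate the displaced pole at $s=1-\alpha$, and an unconditional contour shift to $\Re(s)=\tfrac12+\varepsilon$ controlled by convexity and mean-value bounds (the paper formalizes the continuation and the vertical-strip bounds via Bochner's tube theorem and Proposition \ref{Extending inequalities}, but the substance is the same). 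The residue bookkeeping you describe, including the $\pi/8$ from the Euler factor at $(1+i)$ and the origin of each constant in the second main term, matches the paper's computations.
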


  We note here that the main term in \eqref{Asymptotic for first moment} are identified with exactly the ones obtained by taking $\beta \to \infty$ in the main term of \eqref{Asymptotic for ratios of all characters}. As the error term in \eqref{Asymptotic for first moment} is uniform for $\alpha$, we can further take the limit $\alpha \rightarrow 0^+$ and deduce the following asymptotic formula for the smoothed first moment of central values of quadratic Hecke $L$-functions.
\begin{corollary}
\label{Thmfirstmomentatcentral}
		Let $w(t)$ the same as in Theorem~\ref{Theorem for all characters}. We have, for any $\varepsilon>0$,
\begin{align*}
\begin{split}			
	& 	\sum_{n\odd} L(\tfrac 12,\chi_{(1+i)^2n})w\left( \frac {N(c)}X \right) = XQ(\log X)+O\lz  X^{1/2+\varepsilon}\pz.
\end{split}
\end{align*}
  where $Q$ is a linear polynomial whose coefficients depend only on the absolute constants and $\M w(1)$ and $\M w'(1)$.
\end{corollary}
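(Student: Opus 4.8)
The plan is to obtain Corollary~\ref{Thmfirstmomentatcentral} directly from Theorem~\ref{Thmfirstmoment} by letting $\alpha\to 0^+$. The decisive feature we exploit is that the error term $O\lz X^{1/2+\varepsilon}\pz$ in \eqref{Asymptotic for first moment} is uniform for $0<\Re(\alpha)<1/2$, so it survives the limit unchanged; the entire task is therefore to show that the main term on the right-hand side of \eqref{Asymptotic for first moment} extends continuously to $\alpha=0$ and to identify its value there.

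Denote by $M_1(\alpha)$ and $M_2(\alpha)$ the first and second main terms in \eqref{Asymptotic for first moment}, respectively. First I would record their analytic behaviour near $\alpha=0$. The factor $\zeta_K^{(2)}(1+2\alpha)$ in $M_1$ inherits the simple pole of $\zeta_K$ at $s=1$, so $M_1$ has a simple pole at $\alpha=0$; likewise $\zeta_K(1-2\alpha)$ gives $M_2$ a simple pole there, while the quotient $\Gamma(\alpha)/\Gamma(2\alpha)$ stays regular, tending to $2$ as $\alpha\to0$. The key step is then to compute the two residues and verify that they are equal and opposite, so that $M_1+M_2$ is in fact holomorphic across $\alpha=0$. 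This cancellation is precisely the arithmetic reflection of the functional equation used to produce the dual term $M_2$, and it is what renders the limit finite; individually neither main term has a finite value at $\alpha=0$.

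Granting the cancellation, I would expand everything in powers of $\alpha$ about $\alpha=0$. Writing $X^{1-\alpha}=X\lz 1-\alpha\log X+O(\alpha^2)\pz$ and $\M w(1-\alpha)=\M w(1)-\alpha\M w'(1)+O(\alpha^2)$, and inserting the Laurent expansions of the zeta and Gamma factors, the simple pole $1/\alpha$ carried by $M_2$ multiplied against the $-\alpha\log X$ term in the expansion of $X^{-\alpha}$ produces exactly a contribution proportional to $\log X$. Collecting the finite part of $M_1+M_2$ as $\alpha\to0^+$ then yields $X\,Q(\log X)$ with $Q$ a polynomial of degree one in $\log X$: its leading coefficient arises from the common residue, and its constant coefficient assembles the regular Taylor coefficients at $\alpha=0$. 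By construction these depend only on absolute constants (the residue of $\zeta_K$ at $s=1$, the value $\zeta_K(2)$, and the relevant $\Gamma$-values) together with $\M w(1)$ and $\M w'(1)$, the latter entering through the $-\alpha\M w'(1)$ term above, exactly as asserted.

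The main obstacle I anticipate is the residue cancellation itself: one must match the arithmetic factor in the diagonal term $M_1$ against the conductor- and functional-equation constant packaged into $M_2$, keeping careful track of the powers of $2$ coming from the ramified prime $(1+i)$ and of the factor $2$ produced by $\Gamma(\alpha)/\Gamma(2\alpha)$. Once the polar parts are verified to agree, the remaining extraction of the two coefficients of $Q$ is a routine Taylor computation, and the uniformity of the error term in $\alpha$ immediately upgrades it to the claimed bound $O\lz X^{1/2+\varepsilon}\pz$, completing the proof.
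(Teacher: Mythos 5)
Your proposal is correct and is essentially the paper's own argument: the corollary is deduced exactly by letting $\alpha\to 0^+$ in Theorem \ref{Thmfirstmoment}, using the uniformity of the error term, the cancellation of the two polar parts in $\alpha$ (equivalently, the double pole of $A(s,w)$ at $(s,w)=(1,\tfrac12)$), and the expansions of $X^{1-\alpha}$ and $\widehat w(1-\alpha)$ to extract $XQ(\log X)$ with $Q$ linear and coefficients depending only on absolute constants, $\widehat w(1)$ and $\widehat w'(1)$. One caveat: if you carry out your residue check against the constants as printed in \eqref{Asymptotic for first moment}, it fails by a factor of $4$ (one finds $r_1=\pi^2/(96\zeta_K(2))$ for the first term but $r_2=-\pi^2/(24\zeta_K(2))$ for the second); recomputing the residue at $s=3/2-w$ directly from \eqref{A2C} and the residue of $C_1$ yields the prefactor $2^{2w-3}$ rather than the paper's $2^{2w-1}$, which restores $r_2=-r_1$, so the cancellation you predict is the true state of affairs and your verification step would in fact detect a factor-of-$4$ typo in the paper's second main term.
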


We omit the explicit expression of $Q$ here as our main focus here is the error term.  One can compute it by working with the function $A(s,w)$ defined in \eqref{Aswexp} which has a double pole at $s=1, w=1/2$.  Evaluation of the first moment of central values of the classical quadratic family of Dirichlet $L$-functions goes back to M. Jutila \cite{Jutila}. The error terms in Jutila's results were subsequently improved in \cites{ViTa, DoHo, Young1}. A result of D. Goldfeld and J. Hoffstein in \cite{DoHo} implies that the error term is of size $O(X^{1/2 + \varepsilon})$ for a smoothed first moment, a result which was later obtained by M. P. Young in \cite{Young1} using a recursive approach. We point out here that a result similar to Corollary \ref{Thmfirstmomentatcentral} has been achieved by the first-named author \cite{Gao20} using Young's method. \newline

  In \cite{DGH}, A. Diaconu, D. Goldfeld and J. Hoffstein applied the method of multiple Dirichlet series to obtain  an asymptotical formula for the third moment of quadratic twists of Dirichlet $L$-functions. It is also pointed out by them that there were many advantages to treat multiple Dirichlet series as functions of several complex variables. The general philosophy is then to develop enough functional equations for a corresponding multiple Dirichlet series to obtain meromorphic continuations of the series to a region as large as possible. A typical way of obtaining the desired functional equations is to make use of the available functional equations for $L$-functions associated to primitive characters and then to modify the multiple Dirichlet series involved appropriately so that such functional equations hold for $L$-functions associated to non-primitive characters as well. See \cite{D19} and \cite{DW21} for further applications of this approach in both function fields and number fields setting. \newline

  In comparison, \v Cech obtained functional equations for the multiple Dirichlet series studied in \cite{Cech1} for all characters by modifying the usual approach to acquire such functional equations for $L$-functions attached to primitive characters. This process involves with a type of Poisson summation formula developed by K. Soundararajan in \cite{sound1} when studying the non-vanishing of the central values of quadratic Dirichlet $L$-functions. The advantage of \v Cech's method is that one no longer needs to modify the multiple Dirichlet series involved to obtain the desired functional equations. Our result in this paper can be viewed as another application of the approach of \v Cech by adopting both multiple Dirichlet series and harmonic analysis in the setting of quadratic twists of Hecke $L$-functions in the Gaussian field. \newline

Finally, we remark here that a result similar to Corollary \ref{Thmfirstmomentatcentral} was obtained in \cite[Theorem 1.1]{G&Zhao8}, also utilizing double Dirichlet series.  Our approach here in establishing Theorem \ref{Thmfirstmoment} (and thus Corollary \ref{Thmfirstmomentatcentral}) is however different and hence represents another proof of such a result.

\section{Preliminaries}
\label{sec 2}

We first include some auxiliary results needed in the paper.

\subsection{Quadratic Hecke characters and quadratic Gauss sums}
\label{sec2.4}
   Recall that $K=\mq(i)$ and it is well-known that $K$ has class number one. Set $U_K=\{ \pm 1, \pm i \}$ and $D_{K}=-4$ for the group of units in $\mathcal{O}_K$ and the discriminant of $K$, respectively. We reserve the letter $\varpi$ for a prime in $\mathcal{O}_K$ by which we mean that $(\varpi)$ is a prime ideal.  We say an element $n \in \mathcal{O}_K$ is odd if $(n,2)=1$ and an element $n \in \mathcal{O}_K$ is square-free if no prime square divides $n$. Note that $n$ is square-free if and only if $\mu_{[i]}(n) \neq 0$, where $\mu_{[i]}$ is the M\"obius function on $\mathcal{O}_K$. Notice that $(1+i)$ is the only prime ideal in $\mathcal{O}_K$ that lies above the integral ideal $(2)$ in $\mz$. \newline

   For $q \in \mathcal{O}_K$, let $\left (\mathcal{O}_K / (q) \right )^*$ denote the group of reduced residue classes modulo $q$ which is the multiplicative group of invertible elements of $\mathcal{O}_K / (q)$. Note that $\left (\mathcal{O}_K / ((1+i)^3) \right )^*$ is isomorphic to the cyclic group of order four generated by $i$. This implies that every ideal in $\mathcal{O}_K$ co-prime to $2$ has a unique generator congruent to $1$ modulo $(1+i)^3$.  This generator is called primary. Furthermore, $\left (\mathcal{O}_K / (4) \right )^*$ is isomorphic to the direct product of a cyclic group of order $4$ and a cyclic group of order $2$. More precisely, we have
\begin{align*}
  \left (\mathcal{O}_K / (4) \right )^* \cong  <i> \times <1+ 2(1+i)>.
\end{align*}
  Note that the class $1 \pmod {(1+i)^3}$ gives rise to two classes $1+2(1+i)k, k \in \{0,1 \}$ modulo $4$. It follows from this that an element $n=a+bi \in \mathcal{O}_K$ with $a, b \in \mz$ is primary if and only if $a \equiv 1 \pmod{4}, b \equiv
0 \pmod{4}$ or $a \equiv 3 \pmod{4}, b \equiv 2 \pmod{4}$. This result can be found in Lemma 6 on \cite[p. 121]{I&R}.  We shall refer to these two cases above as $n$ is of type $1$ and type $2$, respectively. \newline

   Let $0 \neq q \in \mathcal{O}_K$ and
\begin{align*}
  \chi: \left (\mathcal{O}_K / (q) \right )^*  \rightarrow S^1 :=\{ z \in \mc :  |z|=1 \}
\end{align*}
be a homomorphism.  Following the nomenclature of \cite[Section 3.8]{iwakow}, we refer $\chi$ as a Dirichlet character modulo $q$. We say that such a Dirichlet character $\chi$ modulo $q$ is primitive if it does not factor through $\left (\mathcal{O}_K / (q') \right )^*$ for any divisor $q'$ of $q$ with $N(q')<N(q)$. \newline

   For any odd $n \in \mathcal{O}_{K}$, the symbol $\leg {\cdot}{n}$ stands for the quadratic residue symbol modulo $n$ in $K$. For an odd prime $\varpi \in \mathcal{O}_{K}$, the quadratic symbol is defined for $a \in \mathcal{O}_{K}$, $(a, \varpi)=1$ by $\leg{a}{\varpi} \equiv a^{(N(\varpi)-1)/2} \pmod{\varpi}$, with $\leg{a}{\varpi} \in \{\pm 1 \}$.  If $\varpi | a$, we define $\leg{a}{\varpi} =0$.  Then the quadratic symbol is extended to any odd composite $n$  multiplicatively. We further define $\leg {\cdot}{c}=1$ for $c \in U_K$. \newline

  The following quadratic reciprocity law (see \cite[(2.1)]{G&Zhao4}) holds for two co-prime primary elements $m, n \in \mathcal{O}_{K}$:
\begin{align}
\label{quadrec}
 \leg{m}{n} = \leg{n}{m}.
\end{align}

    Moreover, we deduce from Lemma 8.2.1 and Theorem 8.2.4 in \cite{BEW} that the following supplementary laws hold for primary $n=a+bi$ with $a, b \in \mz$:
\begin{align}
\label{supprule}
  \leg {i}{n}=(-1)^{(1-a)/2} \qquad \mbox{and} \qquad  \hspace{0.1in} \leg {1+i}{n}=(-1)^{(a-b-1-b^2)/4}.
\end{align}

  Note that the quadratic symbol $\leg {\cdot}{n}$ defined above is a Dirichlet character modulo $n$. It follows from \eqref{supprule} that the quadratic symbol $\psi_i:=\leg {i}{\cdot}$ defines a primitive Dirichlet character modulo $4$.  Also,  \eqref{supprule} implies that  the quadratic symbol $\psi_{1+i}:=\leg {1+i}{\cdot}$ defines a primitive Dirichlet character modulo $(1+i)^5$. This can be inferred by noting that
\begin{align*}
  \left (\mathcal{O}_K / ((1+i)^5) \right )^* \cong  \langle i \rangle \times \langle 1+ 2(1+i) \rangle \times \langle 5 \rangle.
\end{align*}
As $5 \equiv 1 \pmod 4$ and $\psi_{1+i}(5)=-1$, we get that $\psi_{1+i}$ must be primitive. \newline

Furthermore, we observe that there exists a primitive quadratic Dirichlet character $\psi_2$ modulo $2$ since $\left (\mathcal{O}_K / (2) \right )^*$ is isomorphic to the cyclic group of order two generated by $i$.  Then we have $\psi_2(n)=-1$ for $n \equiv i \pmod 2$. \newline

   For any $l \in \mz$ with $4 | l$, we define a unitary character $\chi_{\infty}$ from $\mc^*$ to $S^1$ by:
\begin{align*}
  \chi_{\infty}(z)=\leg {z}{|z|}^l.
\end{align*}
  The integer $l$ is called the frequency of $\chi_{\infty}$. \newline

  Now, for a given Dirichlet character $\chi$ modulo $q$ and a unitary character $\chi_{\infty}$, we can define a Hecke character $\psi$ modulo $q$ (see \cite[Section 3.8]{iwakow}) on the group of fractional ideals $I_K$ in $K$, such that for any $(\alpha) \in I_K$,
\begin{align*}
  \psi((\alpha))=\chi(\alpha)\chi_{\infty}(\alpha).
\end{align*}
If $l=0$, we say that $\psi$ is a Hecke character modulo $q$ of trivial infinite type. In this case, we may regard $\psi$ as defined on $\mathcal{O}_K$ instead of on $I_K$, setting $\psi(\alpha)=\psi((\alpha))$ for any $\alpha \in \mathcal{O}_K$. We may also write $\chi$ for $\psi$ as well, since we have $\psi(\alpha)=\chi(\alpha)$ for any $\alpha \in \mathcal{O}_K$. We then say such a Hecke character $\chi$ is primitive modulo $q$ if $\chi$ is a primitive Dirichlet character. Likewise, we say that  $\chi$ is induced by a primitive Hecke character $\chi'$ modulo $q'$ if $\chi(\alpha)=\chi'(\alpha)$ for all
$(\alpha, q')=1$. \newline

   We now define an abelian group $\text{CG}$ such that it is generated by three primitive quadratic Hecke characters of trivial infinite type with corresponding moduli dividing $(1+i)^5$.  More precisely,
\begin{align*}
  \text{CG}=\{ \psi_j : j=1, i, 1+i, i(1+i) \},
\end{align*}
and the commutative binary operation on $\text{CG}$ is given by $\psi_i \cdot \psi_{i(1+i)}=\psi_{1+i}$, $\psi_{1+i} \cdot \psi_{i(1+i)}=\psi_i$ and $\psi_j \cdot \psi_j=\psi_1$ for any $j$. As we shall only evaluate the related characters at primary elements in $\mathcal{O}_K$, the definition of such a product is therefore justified. \newline

  We note that the product of $\chi_c$ for any primary $c$ with any $\psi_j \in \text{CG}$ gives rise to a Hecke character of trivial infinite type. To determine the primitive Hecke character that induces such a product, we observe that every primary $c$ can be written uniquely as
\begin{align*}
 c=c_1c_2, \quad \text{$c_1, c_2$ primary and $c_1$ square-free}.
\end{align*}
The above decomposition allows us to conclude that if $c_1$ is of type 1, then $\chi_c \cdot \psi_j$ for $j \in \{1, i, 1+i, i(1+i)\}$ is induced by the primitive Hecke character $\leg {\cdot}{c_1}\cdot \psi_j$ with modulus $c_1, 4c_1, (1+i)^5c_1$ and $(1+i)^5c_1$ for $j=1, i, 1+i$ and $i(1+i)$, respectively. This is because that $\leg {\cdot }{c_1}$ is trivial on $U_K$ by \eqref{supprule}. Similarly, if $c_1$ is of type 2,  $\chi_c \cdot \psi_j$ for $j \in \{1, i, 1+i, i(1+i) \}$ is induced by the primitive Hecke character $\psi_j \cdot \psi_2 \cdot \leg {\cdot}{c_1}$ with modulus $2c_1, 4c_1, (1+i)^5c_1$ and $(1+i)^5c_1$ for $j=1, i, 1+i$ and $i(1+i)$, respectively. We make the convention that for any primary $n \in \mathcal O_K$, we shall use $\chi_n\cdot \psi_j$ for any $\psi_j \in \text{CG}$ to denote the corresponding primitive Hecke character $\chi$ that induces it throughout the paper. \newline

  For any complex number $z$, we define
\begin{align*}
 \widetilde{e}(z) =\exp \left( 2\pi i  \left( \frac {z}{2i} - \frac {\bar{z}}{2i} \right) \right) .
\end{align*}
With any $r\in \mathcal{O}_K$, the quadratic Gauss sum $g(r, \chi)$ associated to any quadratic Dirichlet character $\chi$ modulo $q$ is defined by
\begin{align*}
 g(r,\chi) = \sum_{x \shortmod{q}} \chi(x) \widetilde{e}\leg{rx}{q}.
\end{align*}
  For the special case when $\chi=\leg {\cdot}{n}$ of any primary $n$, we further define
\begin{align*}
  g(r,n) = \sum_{x \shortmod{n}} \leg{x}{n} \widetilde{e}\leg{rx}{n}.
\end{align*}

   Let $\varphi_{[i]}(n)$ be the number of elements in $(\mathcal{O}_K/(n))^*$.  We recall from \cite[Lemma 2.2]{G&Zhao4} the following explicitly evaluations of $g(r,n)$ for primary $n$.
\begin{lemma}
\label{Gausssum}
\begin{enumerate}[(i)]
\item  We have
\begin{align*}
 g(rs,n) & = \overline{\leg{s}{n}} g(r,n), \qquad (s,n)=1, \\
   g(k,mn) & = g(k,m)g(k,n),   \qquad  m,n \text{ primary and } (m , n)=1 .
\end{align*}
\item Let $\varpi$ be a primary prime in $\mathcal{O}_K$. Suppose $\varpi^{h}$ is the largest power of $\varpi$ dividing $k$. (If $k = 0$ then set $h = \infty$.) Then for $l \geq 1$,
\begin{align*}
g(k, \varpi^l)& =\begin{cases}
    0 \qquad & \text{if} \qquad l \leq h \qquad \text{is odd},\\
    \varphi_{[i]}(\varpi^l) \qquad & \text{if} \qquad l \leq h \qquad \text{is even},\\
    -N(\varpi)^{l-1} & \text{if} \qquad l= h+1 \qquad \text{is even},\\
    \leg {ik\varpi^{-h}}{\varpi}N(\varpi)^{l-1/2} \qquad & \text{if} \qquad l= h+1 \qquad \text{is odd},\\
    0, \qquad & \text{if} \qquad l \geq h+2.
\end{cases}
\end{align*}
\end{enumerate}
\end{lemma}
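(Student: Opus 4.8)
The plan is to prove the two parts separately: the multiplicative relations in (i) follow from the definition of $g(r,n)$ together with the quadratic reciprocity law \eqref{quadrec}, while the prime-power evaluation in (ii) reduces, via orthogonality of the additive character $\widetilde{e}$, to a single normalized Gauss sum modulo $\varpi$. Throughout I would use the basic fact that $\widetilde{e}$ is additive and trivial on $\mathcal{O}_K$ (indeed $\widetilde{e}(u+vi)=e^{2\pi i v}=1$ for $u,v\in\mz$), so that $x\mapsto\widetilde{e}\leg{kx}{q}$ is a character of $\mathcal{O}_K/(q)$ and $\sum_{x\bmod q}\widetilde{e}\leg{kx}{q}$ equals $N(q)$ when $q\mid k$ and $0$ otherwise.

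For the first identity in (i), I would substitute $x\mapsto s^{-1}x$ (with $s^{-1}$ the inverse of $s$ modulo $n$) in the defining sum; since $(s,n)=1$ this permutes the residues, and factoring out $\leg{s^{-1}}{n}=\overline{\leg{s}{n}}$ (the symbol being real) yields $g(rs,n)=\overline{\leg{s}{n}}\,g(r,n)$. For multiplicativity in the modulus I would invoke the Chinese Remainder Theorem, writing $x=n\bar n u+m\bar m v$ with $m\bar m\equiv1\pmod n$, $n\bar n\equiv1\pmod m$, and $u\bmod m$, $v\bmod n$. Additivity of $\widetilde{e}$ and its triviality on $\mathcal{O}_K$ split the exponential as $\widetilde{e}\leg{k\bar n u}{m}\widetilde{e}\leg{k\bar m v}{n}$, while the symbol splits as $\leg{u}{m}\leg{v}{n}$; the double sum then factors as $g(k\bar n,m)\,g(k\bar m,n)$. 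Applying the first identity converts this to $\leg{n}{m}\leg{m}{n}\,g(k,m)\,g(k,n)$, and reciprocity \eqref{quadrec} gives $\leg{n}{m}=\leg{m}{n}$, so $\leg{n}{m}\leg{m}{n}=1$ and $g(k,mn)=g(k,m)g(k,n)$.

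For (ii) the key observation is that, with $\varpi^h\Vert k$, the factor $\widetilde{e}\leg{kx}{\varpi^l}$ depends only on $x$ modulo $\varpi^{\,l-h}$, whereas $\leg{x}{\varpi^l}=\leg{x}{\varpi}^l$ depends only on $x$ modulo $\varpi$. When $l$ is even, $\leg{x}{\varpi^l}$ is the indicator of $(x,\varpi)=1$, and writing the sum as $\sum_{x\bmod\varpi^l}\widetilde{e}\leg{kx}{\varpi^l}-\sum_{\varpi\mid x}\widetilde{e}\leg{kx}{\varpi^l}$ and applying orthogonality gives $\varphi_{[i]}(\varpi^l)$, $-N(\varpi)^{l-1}$, or $0$ according as $l\le h$, $l=h+1$, or $l\ge h+2$. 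When $l$ is odd, $\leg{x}{\varpi^l}=\leg{x}{\varpi}$; for $l\le h$ the exponential is trivial and the sum is $N(\varpi)^{l-1}\sum_{x\bmod\varpi}\leg{x}{\varpi}=0$, while for $l\ge h+2$ I would prove vanishing by the substitution $x\mapsto x(1+\varpi^{\,l-h-1}u)$ with $u\bmod\varpi$: this fixes $\leg{x}{\varpi}$ but multiplies the summand by $\widetilde{e}\leg{(k\varpi^{-h})xu}{\varpi}$, so averaging over $u$ and using additive orthogonality annihilates the sum.

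The decisive case is $l=h+1$ odd, where the sum collapses to $N(\varpi)^h\,g(k\varpi^{-h},\varpi)$, which by the first identity equals $\leg{k\varpi^{-h}}{\varpi}N(\varpi)^h\,g(1,\varpi)$. Everything thus rests on the exact value of the normalized Gauss sum $g(1,\varpi)$. Its modulus is elementary: a standard double-sum computation gives $g(1,\varpi)\overline{g(1,\varpi)}=N(\varpi)$, and since $\overline{g(1,\varpi)}=\leg{-1}{\varpi}g(1,\varpi)$ with $\leg{-1}{\varpi}=\leg{i}{\varpi}^2=1$, the sum is in fact real with $g(1,\varpi)^2=N(\varpi)$, so $g(1,\varpi)=\pm\sqrt{N(\varpi)}$. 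The genuinely hard part, and the main obstacle, is to show that this sign is exactly $\leg{i}{\varpi}$ --- the Gaussian-integer analogue of Gauss's classical determination of the sign of the quadratic Gauss sum; I would establish it by the usual sign-determination argument or quote it from the literature, matching it against the supplementary law \eqref{supprule}. Inserting $g(1,\varpi)=\leg{i}{\varpi}\sqrt{N(\varpi)}$ and combining the symbols through $\leg{k\varpi^{-h}}{\varpi}\leg{i}{\varpi}=\leg{ik\varpi^{-h}}{\varpi}$ then produces $\leg{ik\varpi^{-h}}{\varpi}N(\varpi)^{\,l-1/2}$, completing the evaluation.
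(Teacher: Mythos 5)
You should first be aware that the paper contains no proof of this lemma at all: it is recalled verbatim from \cite[Lemma 2.2]{G&Zhao4}, which in turn rests on older literature on quadratic Gauss sums in $\mz[i]$. So your reconstruction is necessarily a ``different route,'' and it is essentially the correct standard one. In part (i), the substitution $x\mapsto s^{-1}x$ and the CRT splitting are right, and you correctly locate the role of the hypothesis that $m,n$ be primary: without the sign-free reciprocity \eqref{quadrec} the factor $\leg{n}{m}\leg{m}{n}$ need not equal $1$, and multiplicativity would acquire a sign. In part (ii), the parity split on $l$, the two orthogonality evaluations for even $l$, the vanishing $N(\varpi)^{l-1}\sum_{x \bmod \varpi}\leg{x}{\varpi}=0$ for odd $l\le h$, the unit-translation averaging $x\mapsto x(1+\varpi^{l-h-1}u)$ for $l\ge h+2$, and the collapse of the case $l=h+1$ odd to $N(\varpi)^{h}\leg{k\varpi^{-h}}{\varpi}g(1,\varpi)$ all check out, as does the reality argument $\overline{g(1,\varpi)}=\leg{-1}{\varpi}g(1,\varpi)$ with $\leg{-1}{\varpi}=\leg{i}{\varpi}^2=1$. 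The one caveat is that the entire non-elementary content of the lemma is concentrated in the sign $g(1,\varpi)=\leg{i}{\varpi}N(\varpi)^{1/2}$, which you assert rather than prove; ``the usual sign-determination argument'' is not a routine transfer of Gauss's proof. For a degree-one primary prime $\varpi=a+bi$ of norm $p$ one reduces to a rational Gauss sum, obtaining $\leg{-b}{p}\sqrt{p}$ from Gauss's sign theorem, and must then identify $\leg{-b}{p}$ with $(-1)^{(1-a)/2}$ using classical facts about $\leg{b}{p}$ when $p=a^2+b^2$; degree-two primes require a Hasse--Davenport type computation over $\mathbb{F}_{q^2}$, plus the adjustment from the canonical trace character to $\widetilde{e}(\cdot/\varpi)$. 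Since you explicitly offer to quote this from the literature --- which is precisely the status the entire lemma has in the paper under review --- your treatment is acceptable, and indeed more informative than the paper's citation, but you should understand that deferring the sign defers the only hard step. As a consistency check, your sign formula does hold in examples: for $\varpi=-1+2i$ one computes directly that $g(1,\varpi)=-\sqrt{5}$, while $\leg{i}{\varpi}=(-1)^{(1-(-1))/2}=-1$.
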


   For any primary $c \in \mathcal O_K$, the Chinese remainder theorem implies that $x = 2y +c z$ varies over the residue class modulo $2c$ as $y$ and $z$ vary over the residue class modulo $c$ and $2$, respectively. From this, we infer that for $j=1$ and $2$,
\begin{align*}
 g \Big( r,\psi_j \cdot \leg {\cdot}{c} \Big) = \psi_j(c)\leg {2}{c}g(r, \psi_{j})g(r, c).
\end{align*}
   Observe that we have $\leg {2}{c}=\leg {-i(1+i)^2}{c}=\leg {i}{c}$ and that $\psi_j(c)=1$ since we have $c \equiv 1 \pmod 2$. Also, using that the representation of $\left (\mathcal{O}_K / (2) \right )^*$ can be chosen to consist of $1, i$, we see via a direct computation that $g(r, \psi_{j})=(-1)^{\Im(r)}+(-1)^{\Re (r)+j-1}$. It follows that
\begin{align}
\label{g2exp}
 g\left( r,\psi_j \cdot \leg {\cdot}{c} \right) = \leg {i}{c} \Big((-1)^{\Im (r)}+(-1)^{\Re (r)+j-1}\Big )g(r, c).
\end{align}

\subsection{Functional equations of $L(s,\chi)$}
\label{sect: apprfcneqn}

  Let $\chi$ be a primitive Hecke character of trivial infinite type modulo $q$. A well-known result of E. Hecke shows that $L(s, \chi)$ has an
analytic continuation to the whole complex plane and satisfies the
functional equation (see \cite[Theorem 3.8]{iwakow})
\begin{align}
\label{fneqn}
  \Lambda(s, \chi) = W(\chi)\Lambda(1-s, \chi), \; \mbox{where} \;  W(\chi) = g(1, \chi)(N(q))^{-1/2}
\end{align}
and
\begin{align}
\label{Lambda}
  \Lambda(s, \chi) = (|D_K|N(q))^{s/2}(2\pi)^{-s}\Gamma(s)L(s, \chi).
\end{align}

 Note that $|W(\chi)|=1$ and that $D_K=-4$ in our situation.  We then conclude from \eqref{fneqn} and\eqref{Lambda} that
\begin{align}
\label{fneqnL}
  L(s, \chi)=W(\chi)N(q)^{1/2-s}\pi^{2s-1}\frac {\Gamma(1-s)}{\Gamma (s)}L(1-s, \chi).
\end{align}

Now Stirling's formula yields
\begin{align}
\label{Stirlingratio}
  \frac {\Gamma(1-s)}{\Gamma (s)} \ll (1+|s|)^{1-2\Re (s)}.
\end{align}

  For our purpose here, we need to generalize the above functional equation to non-primitive quadratic characters.  To that end, we first recall that the Mellin transform $\hat{f}$ of any function $f$ is defined to be
\begin{align*}
     \widehat{f}(s) =\int\limits^{\infty}_0f(t)t^s\frac {\dif t}{t}.
\end{align*}

The following Poisson summation formula for smoothed character sums over all elements in $\mathcal{O}_K$ is \cite[Lemma 2.7]{G&Zhao4}.
\begin{lemma}
\label{Poissonsum} Let $\chi$ be a Dirichlet character modulo $n$. For any smooth function $W:\mr^{+} \rightarrow \mr$,  we have for $X>0$,
\begin{align}
\label{PoissonsumQw}
   \sum_{m \in \mathcal{O}_K}\chi(m)W\left(\frac {N(m)}{X}\right)=\frac {X}{N(n)}\sum_{k \in
   \mathcal{O}_K}g(k,\chi)\widetilde{W} \left(\sqrt{\frac {N(k)X}{N(n)}}\right),
\end{align}
    where
\begin{align*}
   \widetilde{W}(t) =& \int\limits^{\infty}_{-\infty}\int\limits^{\infty}_{-\infty}W(N(x+yi))\widetilde{e}\left(- t(x+yi)\right)\dif x \dif y, \quad t \geq 0.
\end{align*}
\end{lemma}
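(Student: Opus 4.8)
The plan is to deduce the claimed identity from the classical Poisson summation formula applied to the rank-two lattice $\mathcal{O}_K=\mz[i]$, viewed inside $\mc\cong\mr^2$. First I would split the left-hand side into residue classes modulo $n$, so that $\chi$ becomes constant on each inner sum:
\begin{align*}
\sum_{m \in \mathcal{O}_K}\chi(m)W\Big(\frac{N(m)}{X}\Big)
 =\sum_{a \bmod n}\chi(a)\sum_{\lambda \in n\mathcal{O}_K}W\Big(\frac{N(a+\lambda)}{X}\Big),
\end{align*}
where the inner summation variable now runs over the shifted sublattice $a+n\mathcal{O}_K$. This reduces matters to applying Poisson summation, in its coset form, to the radial function $f(z):=W(N(z)/X)$ on the lattice $n\mathcal{O}_K$.

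Two geometric facts must be pinned down. The covolume of $n\mathcal{O}_K$ with respect to Lebesgue measure $\dif x\,\dif y$ on $\mc\cong\mr^2$ is $N(n)$, since multiplication by $n$ has determinant $N(n)$; this ultimately supplies the factor $X/N(n)$. The relevant dual lattice, with respect to the symmetric pairing $\langle z,w\rangle=\Im(zw)$ underlying the additive character $\widetilde e$, is $\tfrac1n\mathcal{O}_K$: testing $\mz[i]$ against $1$ and $i$ forces both coordinates of a dual vector to be integral, so $\mz[i]$ is self-dual for this pairing and hence $(n\mz[i])^\vee=\tfrac1n\mz[i]$, which I parametrize by $k\mapsto k/n$ with $k\in\mathcal{O}_K$. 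Poisson summation then converts the inner sum into $\tfrac{1}{N(n)}\sum_{k\in\mathcal{O}_K}\hat f(k/n)\,\widetilde e(ak/n)$, where $\hat f$ carries the conjugate phase $\widetilde e(-\,\cdot\,)$ consistent with the coset character $\widetilde e(ak/n)$.

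Summing over $a$ now collapses the character sum into the Gauss sum, since $\sum_{a \bmod n}\chi(a)\,\widetilde e(ak/n)=g(k,\chi)$ by the very definition in Section~\ref{sec2.4}. It remains to evaluate $\hat f(k/n)=\int_{\mr^2}W(N(z)/X)\,\widetilde e\big({-}zk/n\big)\,\dif x\,\dif y$. Because $f$ is radial, I would substitute $z=\sqrt{X}\,z'$ (Jacobian $X$) and then rotate $z'$ to align the frequency with the imaginary axis; this turns $\hat f(k/n)$ into $X\,\widetilde W(t)$ with $t=\sqrt{X}\,|k/n|=\sqrt{N(k)X/N(n)}$, precisely the argument appearing in the statement. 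Collecting the factors $X$, $1/N(n)$, the Gauss sum $g(k,\chi)$, and $\widetilde W$ yields \eqref{PoissonsumQw}.

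I expect the main obstacle to be the simultaneous bookkeeping of the three normalizations so that they match exactly: the covolume $N(n)$, the precise shape of the dual lattice and its coset phases (so that the $a$-sum is literally $g(k,\chi)$ as defined through $\widetilde e(rx/q)$ rather than a conjugate or rescaled variant), and the radial scaling that must produce both the prefactor $X$ and the argument $\sqrt{N(k)X/N(n)}$ of $\widetilde W$. I would also record that $W$ has sufficient (Schwartz-type) decay for $\hat f$ to be well defined and for Poisson summation, together with the interchange of the finite $a$-sum and the $k$-sum, to be legitimate; the rapid decay of $\widetilde W$ then guarantees absolute convergence of the dual series. Everything else is a routine change of variables combined with the definition of the Gauss sum.
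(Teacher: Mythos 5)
Your proof is correct, and it is essentially the standard argument: the paper itself does not prove this lemma but quotes it from \cite[Lemma 2.7]{G&Zhao4}, where it is established in exactly this way — splitting into residue classes modulo $n$, applying two-dimensional Poisson summation on the shifted lattice $a+n\mathcal{O}_K$ with the self-dual pairing $\Im(zw)$ underlying $\widetilde e$, collapsing the $a$-sum into the Gauss sum $g(k,\chi)$, and using the radial symmetry of $W(N(\cdot)/X)$ to reduce the Fourier integral to $X\,\widetilde W\bigl(\sqrt{N(k)X/N(n)}\bigr)$. Your bookkeeping of the covolume $N(n)$, the dual lattice $\tfrac1n\mathcal{O}_K$, and the rotation step is accurate, and your remark that $W$ needs rapid decay (as it has in all applications here, e.g.\ $W(s)=e^{-s}$) is the right caveat to record.
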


Applying the above lemma with $W(s)=e^{-s}$, we first compute
\begin{align*}
   \widetilde{W}(t) =& \int\limits^{\infty}_{-\infty}\int\limits^{\infty}_{-\infty}e^{-(x^2+y^2)}\widetilde{e}\left(- t(x+yi)\right)\dif x \dif y =\int\limits^{\infty}_{-\infty}\int\limits^{\infty}_{-\infty}e^{-(x^2+y^2)-2\pi i yt} \dif x \dif y =\pi e^{-\pi^2t^2}.
\end{align*}
Now it follows from this and \eqref{PoissonsumQw} that we have
\begin{align*}
   \sum_{m \in \mathcal{O}_K}\chi(m)\exp \left(-2\pi y N(m)\right)=\frac {1}{2 y N(n)}\sum_{k \in
   \mathcal{O}_K}g(k,\chi)\exp \left(-\frac {\pi N(k)}{2 y N(n)}\right).
\end{align*}

  We apply the above identity to the Dirichlet character $\widetilde{\chi_n}$ modulo $2n$ defined by $\widetilde{\chi_n}=\psi_j \cdot \leg {\cdot}{n}$ when $n$ is of type $j$ for $j=1,2$. We note that $\widetilde{\chi_n}(0)=0$ and by Lemma \ref{Gausssum} that $g(0,n)=0$ unless $n=\square$, where we write $\square$ for a perfect square in $\mathcal O_K$. We thus conclude for $n \neq \square$,
\begin{align}
\label{PoissonsumWet1}
   \sum_{\substack{m \neq 0 \\ m \in \mathcal{O}_K}}\widetilde{\chi_n}(m)\exp \left(-2\pi y N(m)\right)=\frac {1}{2 y N(2n)}\sum_{\substack{ k \neq 0 \\ k \in
   \mathcal{O}_K}}g(k,\widetilde{\chi_n})\exp \left(-\frac {\pi N(k)}{2 y N(2n)}\right).
\end{align}

   We consider the Mellin transform of the left-hand side above.  This leads to
\begin{equation} \label{keyleft}
	\int\limits_{0}^{\infty}y^{s}\sum_{\substack{m \neq 0 \\ m \in \mathcal{O}_K}}\widetilde{\chi_n}(m)\exp \left(-2\pi y N(m)\right)\Big )\frac{\dif y}{y}=\sum_{\substack{m \neq 0 \\ m \in \mathcal{O}_K}}\widetilde{\chi_n}(m)\int\limits_{0}^{\infty}y^{s}e^{-2\pi N(m)y}\frac{\dif y}{y}=4(2\pi)^{-s}\Gamma(s) L(s,\widetilde{\chi_n}),
\end{equation}
  where the last equality above follows by noting that $\widetilde{\chi_n}(c)=1$ for $c \in U_K$ and hence can be regarded as a Hecke character modulo $2n$ of trivial infinite type. \newline

   On the other hand, the Mellin transform of the right-hand side of \eqref{PoissonsumWet1} equals
\begin{align}
\begin{split}
\label{keyright}
  & \frac {1}{2 N(2n)} \int\limits_{0}^{\infty}y^{s-1}\sum_{\substack{ k \neq 0 \\ k \in
   \mathcal{O}_K}}g(k,\widetilde{\chi_n})\exp \left(-\frac {\pi N(k)}{2 y N(2n)}\right)\Big )\frac{\dif y}{y} = \frac {1}{2 N(2n)} \sum_{\substack{ k \neq 0 \\ k \in
   \mathcal{O}_K}}g(k,\widetilde{\chi_n}) \int\limits_{0}^{\infty}y^{s-1}\exp \left(-\frac {\pi N(k)}{2 y N(2n)}\right)\frac{\dif y}{y}.
\end{split}
\end{align}
The change of variable $u=N(k)/(4 y N(2n))$ transforms the last integral above to
\begin{align}
\begin{split}
\label{integaleval}
 \Big ( \frac {N(k)}{4 N(2n)} \Big )^{s-1} \int\limits_{0}^{\infty}u^{1-s}\exp \left(-2 \pi u \right)\frac{\dif u}{u} =\Big ( \frac {N(k)}{4 N(2n)} \Big )^{s-1} (2\pi)^{-(1-s)}\Gamma(1-s).
\end{split}
\end{align}  	

   We deduce from  \eqref{keyleft}--\eqref{integaleval} that
\begin{align*} \begin{split}
  & 4(2\pi)^{-s}\Gamma(s) L(s,\widetilde{\chi_n})=\frac {(4 N(2n))^{1-s}}{2 N(2n)} (2\pi)^{-(1-s)}\Gamma(1-s) \sum_{\substack{ k \neq 0 \\ k \in
   \mathcal{O}_K}}\frac {g(k,\widetilde{\chi_n})}{N(k)^{1-s}}.
\end{split}
\end{align*}

The following result summarizes our discussions above.
\begin{proposition}
\label{Functional equation with Gauss sums}
   For any primary $n \in \mathcal O_K$, $n \neq \square $, we have
\begin{align}
\begin{split}
\label{fcneqnallchi}
  &  L(s,\widetilde{\chi_n})= N(2n)^{-s} \pi^{-(1-2s)}\frac {\Gamma(1-s)}{4\Gamma(s)} \sum_{\substack{ k \neq 0 \\ k \in
   \mathcal{O}_K}}\frac {g(k,\widetilde{\chi_n})}{N(k)^{1-s}}.
\end{split}
\end{align}
\end{proposition}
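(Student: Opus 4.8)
The plan is to derive the functional equation \eqref{fcneqnallchi} by feeding the Poisson summation formula of Lemma \ref{Poissonsum} into a Mellin transform, in the classical style of Riemann's proof of the functional equation of $\zeta$. First I would apply Lemma \ref{Poissonsum} with the Gaussian test function $W(s)=e^{-s}$, for which the two-dimensional integral defining $\widetilde{W}$ evaluates explicitly to $\widetilde{W}(t)=\pi e^{-\pi^2 t^2}$. Substituting this into \eqref{PoissonsumQw} and rescaling gives, for any Dirichlet character $\chi$ modulo $n$ and any $y>0$, the self-dual identity
\begin{align*}
\sum_{m \in \mathcal{O}_K}\chi(m)\exp(-2\pi y N(m)) = \frac{1}{2yN(n)}\sum_{k \in \mathcal{O}_K}g(k,\chi)\exp\left(-\frac{\pi N(k)}{2yN(n)}\right).
\end{align*}

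Next I would specialise this to the character $\widetilde{\chi_n}=\psi_j\cdot\leg{\cdot}{n}$ modulo $2n$. The hypothesis $n\neq\square$ enters precisely here: by the $k=0$ (hence $h=\infty$) case of Lemma \ref{Gausssum}(ii), the constant term $g(0,n)$ vanishes unless every prime divides $n$ to an even power, i.e. unless $n=\square$; together with $\widetilde{\chi_n}(0)=0$ this lets me strip the $m=0$ and $k=0$ contributions and restrict both sides to nonzero lattice points, obtaining \eqref{PoissonsumWet1}.

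The core step is to take the Mellin transform in $y$ of both sides of \eqref{PoissonsumWet1}. On the left, interchanging sum and integral and using $\int_0^\infty y^s e^{-2\pi N(m)y}\,\frac{\dif y}{y}=(2\pi N(m))^{-s}\Gamma(s)$ produces $(2\pi)^{-s}\Gamma(s)\sum_{m\neq 0}\widetilde{\chi_n}(m)N(m)^{-s}$; since $\widetilde{\chi_n}$ is a Hecke character of trivial infinite type it is constant equal to $1$ on the four units $U_K$, so the Dirichlet sum over elements collapses to $4L(s,\widetilde{\chi_n})$, recovering \eqref{keyleft}. On the right, the substitution $u=N(k)/(4yN(2n))$ converts each integral into a gamma factor $(N(k)/(4N(2n)))^{s-1}(2\pi)^{-(1-s)}\Gamma(1-s)$ as in \eqref{integaleval}, and assembling the terms gives the dual Dirichlet series $\sum_{k\neq 0}g(k,\widetilde{\chi_n})/N(k)^{1-s}$ times explicit gamma and $N(2n)$ powers. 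Equating the two transforms and simplifying (using $(2\pi)^s(2\pi)^{-(1-s)}=(2\pi)^{2s-1}$, $4^{1-s}/2=2^{1-2s}$, and $N(2n)^{1-s}/N(2n)=N(2n)^{-s}$) collapses the powers of $2$ into the stated $\pi^{-(1-2s)}$ and yields \eqref{fcneqnallchi}.

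The only genuine care required is analytic rather than arithmetic: the interchange of summation and integration and the convergence of both Dirichlet series hold for $\Re(s)$ large (respectively small), and the identity then propagates to all $s$ by analytic continuation, the left side being entire by Hecke's theorem. I expect no deeper obstacle, since the arithmetic input (the explicit evaluation of $g(k,\widetilde{\chi_n})$ and the vanishing of $g(0,n)$ for $n\neq\square$) is already supplied by Lemma \ref{Gausssum}, and the remaining work is bookkeeping of the constants $4$, $2\pi$ versus $\pi$, and the powers of $N(2n)$.
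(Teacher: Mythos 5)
Your proposal is correct and follows essentially the same route as the paper: Lemma \ref{Poissonsum} with the Gaussian test function $W(s)=e^{-s}$, the vanishing of the $m=0$ and $k=0$ terms for $n\neq\square$ via Lemma \ref{Gausssum}, Mellin transforms of both sides of \eqref{PoissonsumWet1}, and the same bookkeeping of constants collapsing to $\pi^{-(1-2s)}$. The only addition is your explicit remark about analytic continuation justifying the identity for all $s$, which the paper leaves implicit.
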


\subsection{A mean value estimate for quadratic Hecke $L$-functions}
 In the proof of Theorem \ref{Theorem for all characters}, we need the following lemma, a consequence of \cite[Corollary 1.4]{BGL}, which gives an upper bound for the second moment of quadratic Hecke $L$-functions.
\begin{lemma}
\label{lem:2.3}
Suppose $s$ is a complex number with $\Re(s) \geq \frac{1}{2}$ and that $|s-1|>\varepsilon$ for any $\varepsilon>0$. Then we have
\begin{align}
\label{L4est}
\sumstar_{\substack{(m,2)=1 \\ N(m) \leq X}} |L(s,\chi_{m})|^2
\ll (X|s|)^{1+\varepsilon},
\end{align}
  where $\sum^*$ henceforth denotes the sum over square-free elements in $\mathcal{O}_K$.
\end{lemma}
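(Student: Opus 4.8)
The plan is to derive \eqref{L4est} from \cite[Corollary 1.4]{BGL}, which furnishes a sharp mean-square bound for the quadratic Hecke $L$-functions $L(s,\chi_m)$ of the present family. Two cosmetic reductions bring that estimate into the form required here. First, restricting the summation to square-free $m$ with $(m,2)=1$ only discards non-negative terms, so it suffices to bound the unrestricted square-free sum. Second, the hypothesis $\Re(s)\ge\tfrac12$ forces $|s|\ge\tfrac12$ and hence $1+|s|\asymp|s|$, so any estimate of the shape $(X(1+|s|))^{1+\varepsilon}$ coming from \cite{BGL} is immediately of the desired shape $(X|s|)^{1+\varepsilon}$. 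If \cite[Corollary 1.4]{BGL} is already phrased for the half-plane $\Re(s)\ge\tfrac12$, this completes the proof after these bookkeeping adjustments.

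Should the cited bound be available only on the line $\Re(s)=\tfrac12$, I would extend it to the half-plane in two stages. For $\Re(s)\ge 1+\varepsilon$ the defining Dirichlet series converges absolutely, giving $|L(s,\chi_m)|\le\zeta_K(\Re(s))\ll_\varepsilon 1$ uniformly in $m$, whence $\sumstar_{N(m)\le X}|L(s,\chi_m)|^2\ll X$. For the strip $\tfrac12<\Re(s)<1+\varepsilon$ I would return to the critical line through the smoothed reproducing formula obtained by shifting the contour in
\[
\frac{1}{2\pi i}\int_{(\kappa)}L(s+w,\chi_m)\,e^{w^2}\,\frac{\dif w}{w}
\]
from $\Re(w)=\kappa$ (large) down to $\Re(w)=\tfrac12-\Re(s)$. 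For non-principal $\chi_m$ the function $L(\,\cdot\,,\chi_m)$ is entire, so the only residue encountered is that at $w=0$, namely $L(s,\chi_m)$; this exhibits $L(s,\chi_m)$ as a bounded smoothed Dirichlet polynomial plus an integral of $L(\tfrac12+i(\Im(s)+v),\chi_m)$ against a Gaussian weight in $v$. An application of Cauchy--Schwarz in $v$ then yields $|L(s,\chi_m)|^2\ll 1+\int_{\mathbb R}|L(\tfrac12+i(\Im(s)+v),\chi_m)|^2e^{-v^2}\,\dif v$; summing over the non-principal $m$ and inserting the critical-line bound term by term gives $(X(1+|\Im(s)|))^{1+\varepsilon}\ll(X|s|)^{1+\varepsilon}$ after integrating the rapidly decaying weight.

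The hypothesis $|s-1|>\varepsilon$ enters solely through the finitely many terms for which $\chi_m$ is principal (namely $m=\pm 1$), where $L(s,\chi_m)=\zeta_K^{(2)}(s)$ has a simple pole at $s=1$; separated off, each such term is bounded by $|\zeta_K^{(2)}(s)|^2\ll_\varepsilon 1$ throughout $\Re(s)\ge\tfrac12$, $|s-1|>\varepsilon$, by the standard polynomial bound for $\zeta_K$ away from its pole, so these contribute $O_\varepsilon(1)$ in total. \textbf{The main obstacle} I anticipate, in the event the extension argument is needed, is the justification of the contour shift: one must control the vertical growth of $L(s+w,\chi_m)$ so that the displaced integral converges absolutely and summation may be interchanged with integration. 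This is supplied by the convexity bound for $L(s,\chi_m)$ together with the Stirling estimate \eqref{Stirlingratio}, which force at most polynomial growth in $\Im(w)$, comfortably dominated by the factor $e^{w^2}$; the mild singularity of the kernel $1/w$ as $\Re(s)\to\tfrac12^{+}$ is harmless, since that regime is already covered directly by \cite[Corollary 1.4]{BGL}. If, on the other hand, the cited corollary applies throughout $\Re(s)\ge\tfrac12$, then no obstacle remains and the lemma is immediate from the reductions of the first paragraph.
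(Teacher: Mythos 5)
Your primary route---deducing the bound directly from \cite[Corollary 1.4]{BGL}, after noting that restricting to odd square-free $m$ only drops non-negative terms and that $1+|s|\asymp|s|$ when $\Re(s)\ge\tfrac12$---is precisely the paper's proof, which states the lemma as an immediate consequence of that corollary, with the hypothesis $|s-1|>\varepsilon$ serving only to stay away from the pole arising from principal characters. The contingency extension off the critical line in your second paragraph is not needed, so your first paragraph alone already matches the paper's argument.
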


\subsection{Some results on multivariable complex functions}
	
   We include in this section some results from multivariable complex analysis. First we need the notation of a tube domain.
\begin{defin}
		An open set $T\subset\mc^n$ is a tube if there is an open set $U\subset\mr^n$ such that $T=\{z\in\mc^n:\ \Re(z)\in U\}.$
\end{defin}
	
   For a set $U\subset\mr^n$, we define $T(U)=U+i\mr^n\subset \mc^n$.  We have the following Bochner's Tube Theorem \cite{Boc}.
\begin{theorem}
\label{Bochner}
		Let $U\subset\mr^n$ be a connected open set and $f(z)$ be a function that is holomorphic on $T(U)$. Then $f(z)$ has a holomorphic continuation to the convex hull of $T(U)$.
\end{theorem}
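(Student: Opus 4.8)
The plan is to realize $T(\mathrm{conv}(U))=\mathrm{conv}(U)+i\mr^n$ as the set to which $f$ extends, exploiting two features of the tube: its invariance under the purely imaginary translations $z\mapsto z+i\eta$, and the fact that $T(\mathrm{conv}(U))$ is convex, hence connected and simply connected. First I would record that any holomorphic extension is unique, since $T(U)=U\times\mr^n$ is connected (as $U$ is) and the identity theorem applies on the connected target. Because $T(\mathrm{conv}(U))$ is simply connected, it then suffices to produce, for each $x^{*}\in\mathrm{conv}(U)$, a holomorphic extension of $f$ to some tube neighbourhood of $x^{*}+i\mr^n$: the local pieces agree with $f$ on $T(U)$ and hence with one another on overlaps by uniqueness, and simple connectivity removes any monodromy obstruction, so they patch to a single-valued global extension.

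Next I would reduce the local problem to extension across a single real segment. By Carath\'eodory's theorem every $x^{*}\in\mathrm{conv}(U)$ is a convex combination of finitely many points of $U$, so by induction on the number of points it is enough to treat $x^{*}=\lambda a+(1-\lambda)b$ with $a,b\in U$ and $\lambda\in(0,1)$; that is, to extend $f$ across the tube over the open segment joining $a$ and $b$. After an affine change of the real coordinates, which preserves the tube structure, I may assume the segment lies along the first axis, with $U$ containing small balls about its two endpoints. Note that for $n=1$ a connected open $U\subset\mr$ is already an interval, hence convex, so the statement is vacuous; the argument must therefore genuinely use $n\ge 2$, the transverse variables $z'=(z_2,\dots,z_n)$ in which $f$ remains holomorphic being precisely what carries the extension.

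The core step is this segment extension, which I would obtain from Hartogs' continuity principle (the Kontinuit\"atssatz). The idea is to sweep the point $x^{*}+iy^{*}$ by a continuous one-parameter family of analytic discs whose boundaries remain inside $T(U)$, where $f$ is already holomorphic. The invariance of the tube under the imaginary translations is exactly what lets one push the boundaries of these discs off into the unbounded imaginary directions while keeping them over the neighbourhoods of $a$ and $b$, yet arrange the disc interiors to reach over the intermediate segment. Since the family starts inside $T(U)$ and has all boundaries in $T(U)$, the continuity principle places the swept-out region, in particular $x^{*}+iy^{*}$, inside the domain to which $f$ continues holomorphically.

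The main obstacle, and the part demanding genuine care, is the construction and analysis of these analytic discs: one must exhibit an explicit holomorphic family of maps of the closed unit disc into $\mc^n$ whose boundaries lie over $U$ itself, not merely over its convex hull, while their centres cover the segment, and then verify the hypotheses of the continuity principle, including single-valuedness of the extension and the behaviour of $f$ in the non-compact imaginary directions. An alternative route that bypasses the disc construction is a Fourier--Laplace (Paley--Wiener) argument: using that $\log|f|$ is plurisubharmonic, one shows that the ``spectrum'' of $f$ in the imaginary variables is supported so that the representing integral still converges over the larger tube $T(\mathrm{conv}(U))$. There the technical heart shifts instead to justifying the transform for functions carrying no prescribed decay, which one handles by testing against a dense class of rapidly decreasing functions and passing to the limit.
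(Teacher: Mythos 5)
A preliminary remark: the paper does not prove this statement at all --- it quotes Bochner's Tube Theorem from the reference \cite{Boc} --- so your argument must be judged on its own, and as it stands it has a genuine gap. The step you yourself flag as ``the main obstacle,'' namely the construction of the family of analytic discs and the verification of the hypotheses of the Kontinuit\"atssatz, is not a technicality to be deferred: it \emph{is} the theorem. It is also genuinely delicate, in a way your sketch underestimates. The natural candidate disc over a segment $[a,b]$, say $\zeta\mapsto a+(b-a)w(\zeta)$ with $\Re\, w$ the harmonic measure of a boundary arc, fails exactly where it matters: near the endpoints of the arc the real part of the boundary values sweeps through the \emph{interior} of the segment, so the disc boundary does not stay inside $T(U)$ (which near the segment only contains the tubes over small balls around $a$ and $b$). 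Every correct proof of the tube theorem has to circumvent precisely this obstruction (Bochner's original route instead uses Fourier--Laplace representations, where the lack of decay in the imaginary directions is handled by Gaussian damping factors such as $e^{\epsilon(z_1^2+\cdots+z_n^2)}$, which decay as $|\Im(z)|\to\infty$, followed by $\epsilon\to 0$; your ``alternative route'' also stops short of this). So the proposal is a survey of two viable known strategies, not a proof of either.

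There are also two flaws in the reduction scaffolding. First, the patching argument is incorrect as stated: if $f_1$ and $f_2$ are extensions to tube neighbourhoods $T(B_1)$ and $T(B_2)$ of two fibers $x_1^*+i\mr^n$ and $x_2^*+i\mr^n$, each agreeing with $f$ on $T(U)$, they need not agree on the overlap $T(B_1)\cap T(B_2)=T(B_1\cap B_2)$, because this overlap can be disjoint from $T(U)$; the identity theorem then gives nothing. What simple connectivity actually buys is the monodromy theorem, which applies to analytic continuation \emph{along paths}; so the Kontinuit\"atssatz step must be organized as continuation along chains of discs joined to the original germ, and only then can single-valuedness be concluded --- this restructuring is needed, not optional. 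Second, the Carath\'eodory induction is not self-contained: after the first step the endpoints of the next segment lie in $\mathrm{conv}(U)$, not in $U$, so the segment lemma must be formulated for endpoints in the (open) partially extended tube and one must check that the extended domain is again a tube over an open connected set before iterating; the induction terminates after at most $n$ steps by Carath\'eodory, but the lemma as you state it (endpoints in $U$) does not suffice.
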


The convex hull of an open set $T\subset\mc^n$ is denoted by $\hat T$.  Then we quote the result from \cite[Proposition C.5]{Cech1} concerning the modulus of holomorphic continuations of functions in multiple variables.
\begin{prop}
\label{Extending inequalities}
		Assume that $T\subset \mc^n$ is a tube domain, $g,h:T\rightarrow \mc$ are holomorphic functions, and let $\tilde g,\tilde h$ be their holomorphic continuations to $\hat T$. If  $|g(z)|\leq |h(z)|$ for all $z\in T$, and $h(z)$ is nonzero in $T$, then also $|\tilde g(z)|\leq |\tilde h(z)|$ for all $z\in \hat T$.
\end{prop}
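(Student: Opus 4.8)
The plan is to reduce the two-function inequality to a one-function maximum-modulus statement on the tube and then invoke Bochner's Tube Theorem. Write $T=T(U)$ with $U\subset\mr^n$ connected and open; by Theorem \ref{Bochner} the convex hull is $\hat T=T(\hat U)$, where $\hat U$ denotes the convex hull of $U$ in $\mr^n$. Since $h$ is nowhere zero on $T$, the quotient $f:=g/h$ is holomorphic on $T$ and satisfies $|f(z)|\le 1$ there. Applying Theorem \ref{Bochner} to $f$ produces a holomorphic continuation $\tilde f$ of $f$ to $\hat T$, and by uniqueness of analytic continuation the identity $\tilde g=\tilde f\,\tilde h$, which holds on $T$, persists throughout $\hat T$.

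Granting for the moment that $|\tilde f(z)|\le 1$ on all of $\hat T$, the conclusion is immediate: at any $z\in\hat T$ with $\tilde h(z)\ne 0$ we have $|\tilde g(z)|=|\tilde f(z)|\,|\tilde h(z)|\le|\tilde h(z)|$, while at a zero of $\tilde h$ the relation $\tilde g=\tilde f\,\tilde h$ forces $\tilde g(z)=0=\tilde h(z)$; in either case $|\tilde g(z)|\le|\tilde h(z)|$. Thus the entire problem reduces to propagating the bound $|\tilde f|\le 1$ from the tube $T(U)$ to its convex hull $T(\hat U)$.

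To carry out this propagation I would introduce $M(x):=\sup_{y\in\mr^n}|\tilde f(x+iy)|$ for $x\in\hat U$ and show that $\log M$ is convex on $\hat U$. Since $|f|\le 1$ on $T$ gives $M(x)\le 1$ for every $x\in U$, convexity finishes the argument: by Carathéodory's theorem an arbitrary $x\in\hat U$ can be written as a convex combination $x=\sum_j t_j x_j$ with $x_j\in U$, whence $\log M(x)\le\sum_j t_j\log M(x_j)\le 0$, i.e.\ $|\tilde f|\le 1$ on $\hat T$, as required. The convexity itself I would extract from a Hadamard three-lines argument in the imaginary directions: fixing $a,b\in\hat U$, setting $v=b-a$, and choosing any $y_0\in\mr^n$, the slice $\zeta\mapsto\tilde f\big(a+\zeta v+iy_0\big)$ is holomorphic on the strip $0<\Re(\zeta)<1$, has modulus at most $M(a)$ on $\Re(\zeta)=0$ and at most $M(b)$ on $\Re(\zeta)=1$, and taking the supremum over $y_0$ and over $\Im(\zeta)$ yields $\log M\big((1-\sigma)a+\sigma b\big)\le(1-\sigma)\log M(a)+\sigma\log M(b)$.

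I expect the three-lines step to be the main obstacle. The classical three-lines theorem requires control on the growth of the slice functions in the imaginary directions within the unbounded strip, which is not furnished by holomorphy on the tube alone; this must be supplied by a Phragm\'en--Lindel\"of argument. Here one exploits that $\tilde f$ is the continuation of a function bounded by $1$, so that its growth along imaginary rays inside $\hat T$ is sufficiently moderate for the Phragm\'en--Lindel\"of form of the three-lines estimate to apply. Securing this growth bound, and thereby the convexity of $\log M$, is the technical heart of the proof; once it is in place, the reduction described above delivers $|\tilde g|\le|\tilde h|$ on $\hat T$.
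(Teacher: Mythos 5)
First, a point of reference: the paper itself gives no proof of this proposition; it is quoted verbatim from \cite[Proposition C.5]{Cech1}. So your attempt can only be measured against the standard proof of that result. Your reduction is the right one and matches it: pass to $f=g/h$, which is holomorphic with $|f|\le 1$ on $T$; continue $f$ to $\hat T$ by Theorem \ref{Bochner}; propagate the identity $\tilde g=\tilde f\,\tilde h$ from $T$ to $\hat T$ by uniqueness of analytic continuation ($\hat T$ is convex, hence connected); and dispose of the zeros of $\tilde h$ exactly as you do. After this reduction the entire content of the proposition is the single claim that $|\tilde f|\le 1$ on $\hat T$.

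Your proof of that claim has a genuine gap, and it sits exactly where you flag it. A three-lines or Phragm\'en--Lindel\"of argument on a slice $\zeta\mapsto\tilde f(a+\zeta(b-a)+iy_0)$ requires an a priori growth hypothesis inside the strip $0\le\Re(\zeta)\le 1$: boundedness on the two boundary lines alone implies nothing, as the function $F(\zeta)=\exp(-ie^{i\pi\zeta})$ shows, since $|F|=1$ on $\Re(\zeta)\in\{0,1\}$ while $|F(\tfrac12+it)|=\exp(e^{\pi|t|})$ as $t\to-\infty$. Your justification for the needed growth control --- that $\tilde f$ ``is the continuation of a function bounded by $1$'' and hence grows moderately --- is not an argument: Bochner's theorem, used as a black box, provides existence of $\tilde f$ with no quantitative control whatsoever on $\hat T\setminus T$, and the asserted moderate growth is essentially the inequality $|\tilde f|\le 1$ you are trying to prove. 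The argument is circular. (A secondary issue: your $M(x)$ may a priori be $+\infty$ for $x\in\hat U\setminus U$, so even the statement of log-convexity needs care, and restricting the endpoints $a,b$ to $U$ does not help because the slice still passes through the uncontrolled region.) The way this is actually proved is not by recovering the bound after the fact but by building it into the continuation: in the proof of the tube theorem the extension is constructed via the continuity principle, each new value of $\tilde f$ being a Cauchy-type integral over the boundary of an analytic disc lying in the region already under control, so the maximum principle transports the bound $\le 1$ along with the continuation; this yields the quantitative form $\sup_{\hat T}|\tilde f|=\sup_{T}|f|$ of Bochner's theorem, which is what \cite{Cech1} rests on. To complete your proof you must either reprove the tube theorem in this quantitative form or cite such a statement explicitly; the a posteriori Phragm\'en--Lindel\"of step cannot be repaired.
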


\section{Proof of Theorem \ref{Theorem for all characters}}

The Mellin inversion renders that
\begin{equation}
\label{Integral for all characters}
		\sum_{\substack{n\odd}}\frac{L(\tfrac{1}{2}+\alpha,\chi_{(1+i)^2n})}{L(\tfrac{1}{2}+\beta,\chi_{(1+i)^2n})}w \bfrac {N(n)}X=\frac1{2\pi i}\int\limits_{(2)}A\lz s,\tfrac12+\alpha,\tfrac12+\beta\pz X^s\widehat w(s) \dif s,
\end{equation}
   where
\begin{align}
\label{Aswzexp}
\begin{split}
A(s,w,z)=& \sum_{\substack{n\odd}}\frac{L^{(2)}(w,\chi_n)}{L^{(2)}(z,\chi_n)N(n)^s}
=\sum_{\substack{k,m,n\odd}}\frac{\mu_{[i]}(k)\chi_n(k)\chi_n(m)}{N(k)^zN(m)^wN(n)^s}=\sum_{\substack{m,k\odd}}\frac{\mu_{[i]}(k)L^{(2)}\lz s,\chi_{mk} \pz}{N(m)^wN(k)^z}.
\end{split}
\end{align}
 Here the last equality above follows from the quadratic reciprocity law given in \eqref{quadrec}. \newline

  To deal with the last integral in \eqref{Integral for all characters}, we need to understand the analytical properties of $A(s,w,z)$. We shall do so in the following sections.
	
\subsection{First region of absolute convergence of $A(s,w,z)$}

   We start with the series representation for $A(s,w,z)$ given by the first equality in \eqref{Aswzexp} to see that
\begin{equation*}
		A(s,w,z)=\sum_{\substack{n\odd}}\frac{L^{(2)}(w,\chi_n)}{L^{(2)}(z,\chi_n)N(n)^s}=\sum_{\substack{h\odd}}\frac {\prod_{\varpi | h}\big (1-\chi_n(\varpi)N(\varpi)^{-w}\big) }{N(h)^{2s}}\sumstar_{\substack{n\odd}}\frac{L^{(2)}(w,\chi_n)}{L^{(2)}(z,\chi_{nh^2})N(n)^s}.
\end{equation*}

Now the bound
\begin{align*}
1-\chi_n(\varpi)N(\varpi)^{-w} \leq 2N(\varpi)^{\max (0,-\Re(w))}
\end{align*}
leads to
\begin{align}
\label{nongenericprodest}
		\prod_{\varpi | h}\big (1-\chi_n(\varpi)N(\varpi)^{-w}\big) \leq 2^{\omega_{[i]}(h)}N(h)^{\max (0,-\Re(w))} \ll N(h)^{\max (0,-\Re(w))+\varepsilon}.
\end{align}
Here $\omega_{[i]}(h)$ denotes the number of distinct primes in $\mathcal O_K$ dividing $h$ and the last estimate above follows from the well-known bound
\begin{align*}
   \omega_{[i]}(h) \ll \frac {\log N(h)}{\log \log N(h)}, \; \mbox{for} \; N(h) \geq 3.
\end{align*}

   We therefore deduce that
\begin{align}
\label{Abound}
		A(s,w,z) \ll \sum_{\substack{h\odd}}\frac {N(h)^{\max (0,-\Re(w))+\varepsilon} }{N(h)^{2s}}\sumstar_{\substack{n\odd}}\frac{L^{(2)}(w,\chi_n)}{L^{(2)}(z,\chi_{nh^2})N(n)^s}.
\end{align}

   We treat the right-hand side expression above by noting the following bound from \cite[Theorem 5.19]{iwakow} that asserts on GRH,
 \begin{align} \label{Lindelof}
\begin{split}
      \frac1{|L(s,\chi_n)|}\ll & (|s|N(n))^{\varepsilon}, \quad \Re(s) \geq 1/2+\varepsilon.
\end{split}
\end{align}
   Moreover,  we deduce from \eqref{L4est} and Cauchy's inequality that for any $\Re(s) \geq 1/2$ and that $|s-1|>\varepsilon>0$,
\begin{align}
\label{L1est}
\sumstar_{\substack{(m,2)=1 \\ N(m) \leq X}} |L(s,\chi_{m})|
\ll X^{1+\varepsilon} |s|^{1/2+\varepsilon}.
\end{align}
   It follows from the above that except for a simple pole at $w=1$, both sums of the right-hand side expression in \eqref{Abound} are convergent for $\Re(s)>1$,  $\Re(w) \geq 1/2$, $\Re(z)>1/2$. \newline

   On the other hand, we apply the functional equation given in \eqref{fneqnL} for $L(w,\chi_n)$ when $\Re(w)<1/2$ to see that both sums of the right-hand side of \eqref{Abound} converge for $\Re(s+w)>3/2$, $\Re(w) < 1/2$, $\Re(z)>1/2$. \newline

  We thus conclude that the function $A(s,w,z)$ converges absolutely in the region
\begin{equation*}
		S_0=\{(s,w,z): \Re(s)>1,\ \Re(s+w)>\tfrac{3}{2},\ \Re(z)>\tfrac{1}{2} \}.
\end{equation*}

We also deduce from the last expression of \eqref{Aswzexp} that $A(s,w,z)$ is given by the series
\begin{align}
\label{Sum A(s,w,z) over n}		
A(s,w,z)=&\sum_{\substack{m,k\odd}}\frac{\mu_{[i]}(k)L^{(2)}\lz s,\chi_{mk}\pz}{N(m)^wN(k)^z}=\sum_{\substack{m,k\odd}}\frac{\mu_{[i]}(k)L\lz s,\widetilde\chi_{mk}\pz}{N(m)^wN(k)^z}.
\end{align}

As noted before, $\widetilde\chi_{n}$ is a Hecke character modulo $2n$ of trivial infinite type.  This information will be needed later in the application of the convexity bound and the evaluation of the relevant Gauss sums using Lemma~\ref{Gausssum}.  We write $mk=(mk)_0(mk)^2_1$ with $(mk)_0$ primary and square-free.  The bound in \eqref{nongenericprodest} renders
\begin{align}
\label{Lsmkbound}	
\begin{split}	
L\lz s,\widetilde\chi_{mk}\pz= & \prod_{\varpi | (mk)_1}\big (1-\chi_{(mk)_0}(\varpi)N(\varpi)^{-s}\big) \cdot L\lz s,\widetilde\chi_{(mk)_0}\pz
\ll N((mk)_1)^{\max (0,-\Re(s))+\varepsilon} |L\lz s,\widetilde\chi_{(mk)_0}\pz| \\
\ll & N(mk)^{\max (0,-\Re(s))+\varepsilon} |L\lz s,\widetilde\chi_{(mk)_0}\pz|.
\end{split}
\end{align}

Now we use the convexity bound for $L(s, \widetilde\chi_{(mk)_0})$ (see \cite[Exercise 3, p. 100]{iwakow}) which asserts that
 \begin{align}
\label{Lconvexbound}
\begin{split}
 L\lz s,\widetilde\chi_{(mk)_0}\pz \ll
\begin{cases}
 \left(N((mk)_0)(1+|s|^2) \right)^{(1-\Re(s))/2+\varepsilon}, & 0 \leq \Re(s) \leq 1, \\
 N((mk)_0)^{\varepsilon}, & \Re(s)>1, \ |s-1|>\varepsilon.
\end{cases}
\end{split}
\end{align}

   Applying the above estimations for the case $\Re(s) \geq 1/2$, together with the functional equation in \eqref{fneqnL} for $L\lz s,\widetilde\chi_{mk}\pz $ with $\Re(s)<1/2$ to \eqref{Sum A(s,w,z) over n}, gives that, except for a simple pole at $s=1$ arising from the summands with $mk=\square$, the function $A(s,w,z)$ converges absolutely in the region
\begin{align*}
		S_1=& \{(s,w,z):\hbox{$\Re(w)>1,\ \Re(z)>1,\ \Re(s)>1$}\} \bigcup \{(s,w,z):\hbox{$0<\Re(s)<1, \ \Re(s/2+w)>\frac32,\ \Re(s/2+z)>\frac32$}\} \\
   & \hspace*{1cm} \bigcup \{(s,w,z):\hbox{$\Re(s)<0,\ \Re(z)>1,\ \Re(s+w)>\frac32,\ \Re(s+z)>\frac32$}\},
\end{align*}

	The convex hull of $S_0$ and $S_1$ is
\begin{equation}
\label{Region of convergence of A(s,w,z)}
		S_2=\{(s,w,z):\Re(z)> \tfrac{1}{2},\ \Re(s+w)>\tfrac{3}{2},\ \Re(s+z)> \tfrac{3}{2} \}.
\end{equation}
Thus by our discussions above and Theorem \ref{Bochner}, $(s-1)(w-1)A(s,w,z)$ converges has a holomorphic continuation to the region $S_2$.

\subsection{Residue of $A(s,w,z)$ at $s=1$}
	
	It follows from \eqref{Sum A(s,w,z) over n} that $A(s,w,z)$ has a pole at $s=1$ arising from the terms with $mk=\square$. In this case, we have
\begin{equation*}
		L^{(2)}\lz s, \chi_{mk}\pz=\zeta_K(s)\prod_{\varpi|2mk}\lz1-\frac1{N(\varpi)^s}\pz.
\end{equation*}
    Recall that the residue of $\zeta_K(s)$ at $s = 1$ equals $\pi/4$.  Then we have	
\begin{equation*}
 \res_{s=1}A(s,w,z)=\frac {\pi}{4} \sum_{\substack{mk=\square \\m,k\odd}}\frac{\mu_{[i]}(k)a_1(2mk)}{N(m)^wN(k)^z}=
\frac{\pi}{8} \sum_{\substack{mk=\square \\m,k\odd}}\frac{\mu_{[i]}(k)a_1(mk)}{N(m)^wN(k)^z},
\end{equation*}
   where we denote by $a_t(n)$ for any $t \in \mc$ the multiplicative function such that $a_t(\varpi^k)=1-1/(N(\varpi)^t)$ for any prime $\varpi$.  We express the last sum as an Euler product and obtain, via a direct computation similar to that done in \cite[(6.8)]{Cech1}, that, for $w=1/2+\alpha$ and $z=1/2+\beta$,
\begin{align}
\label{Residue at s=1}
	\res_{s=1}& A(s, \tfrac{1}{2}+\alpha,\tfrac{1}{2}+\beta) =\frac{\pi \zeta_K^{(2)}(1+2\alpha)}{8\zeta_K^{(2)}(1+\alpha+\beta)}
\prod_{(\varpi,2)=1}\lz1+\frac{N(\varpi)^{\alpha-\beta}-1}{N(\varpi)^{1+\alpha-\beta}(N(\varpi)^{1+\alpha+\beta}-1)}\pz.
\end{align}

\subsection{Second region of absolute convergence of $A(s,w,z)$}

Applying \eqref{Sum A(s,w,z) over n}, together with the functional equation from Proposition \ref{Functional equation with Gauss sums}, gives that
\begin{align}
\begin{split}
\label{A1A2}
 A(s,w,z) =& \sum_{\substack{m,k\odd \\ mk =  \square}}\frac{\mu_{[i]}(k)L\lz s,\widetilde \chi_{mk}\pz}{N(m)^wN(k)^z} +\sum_{\substack{m,k\odd \\ mk \neq \square}}\frac{\mu_{[i]}(k)L\lz s,\widetilde \chi_{mk}\pz}{N(m)^wN(k)^z} \\
=&  \sum_{\substack{m,k\odd \\ mk =  \square}}\frac{\mu_{[i]}(k)\zeta_K(s)\prod_{\varpi | 2mk}(1-N(\varpi)^{-s}) }{N(m)^wN(k)^z} +\sum_{\substack{m,k\odd \\ mk \neq \square}}\frac{\mu_{[i]}(k)L\lz s,\widetilde \chi_{mk}\pz}{N(m)^wN(k)^z} =: A_1(s,w,z)+A_2(s,w,z).
\end{split}
\end{align}

   We observe first that, upon setting $mk=l$,
\begin{align}
\begin{split}
\label{A1swz}
 A_1(s,w,z) =&  \zeta^{(2)}_K(s) \sum_{\substack{l \odd \\ l =  \square}}\frac {\prod_{\varpi | l}(1-N(\varpi)^{-s})}{N(l)^{w}} \sum_{\substack{ k|l \\ k \odd}}\frac{\mu_{[i]}(k)}{N(k)^{z-w}} \\
=& \zeta^{(2)}_K(s) \prod_{(\varpi, 2)=1}\Big (1+ \frac 1{N(\varpi)^{2w}}(1-N(\varpi)^{-s})(1-N(\varpi)^{-(z-w)})(1-N(\varpi)^{-2w})^{-1}\Big ).
\end{split}
\end{align}
  It follows from the above that except for a simple pole at $s=1$, $A_1(s,w,z)$ is holomorphic in the region
\begin{align}
\label{S3}
		S_3=\Bigg\{(s,w,z):\ &\Re(s+2w)>1,\ \Re(w+z)>1, \ \Re(2w)>1, \ \Re(s+w+z) >1 \Bigg\}.
\end{align}

Next, we apply the functional equation \eqref{fcneqnallchi} for $L\lz s,\widetilde \chi_{mk}\pz$ in the case $mk \neq \square$.  This gives
\begin{align}
\begin{split}
\label{Functional equation in s}
 A_2(s,w,z) =\frac{4^{-s}\pi^{2s-1}\Gamma (1-s)}{4\Gamma(s) } C(1-s,s+w,s+z),
\end{split}
\end{align}
  where $C(s,w,z)$ is given by the triple Dirichlet series
\begin{equation*}
		C(s,w,z)=\sum_{\substack{q \neq 0 \\m,k\odd \\ mk \neq \square}}\frac{\mu_{[i]}(k)g(q, \widetilde \chi_{mk})}{N(q)^sN(m)^wN(k)^z}=\sum_{\substack{q \neq 0 \\m,k\odd }}\frac{\mu_{[i]}(k)g(q, \widetilde \chi_{mk})}{N(q)^sN(m)^wN(k)^z}-\sum_{\substack{q \neq 0 \\m,k\odd \\ mk=\square }}\frac{\mu_{[i]}(k)g(q, \widetilde \chi_{mk})}{N(q)^sN(m)^wN(k)^z}.
\end{equation*}	

 By \eqref{Region of convergence of A(s,w,z)}, \eqref{S3} and the functional equation \eqref{Functional equation in s}, we see that $C(s,w,z)$ is initially defined in the region
\begin{equation*}
		\{(s,w,z):\ \Re(s+2w)>2, \ \Re(s+w)> \tfrac{3}{2}, \ \Re(s+z)> \tfrac{3}{2},\ \Re(w)> \tfrac{3}{2},\ \Re(z)> \tfrac{3}{2} \}.
\end{equation*}
	To extend this region, we exchange the summations in $C(s,w,z)$ and setting $mk=l$ to obtain that
\begin{align}
\label{Cexp}
\begin{split}
  C(s,w,z)=& \sum_{q \neq 0}\frac{1}{N(q)^s}\sum_{\substack{l\odd }}\frac{g\lz q, \widetilde\chi_{l}\pz}{N(l)^w}\sum_{\substack{ k|l \\ k \odd}}\frac{\mu_{[i]}(k)}{N(k)^{z-w}}-\sum_{q \neq 0}\frac{1}{N(q)^s}\sum_{\substack{l\odd \\ l = \square}}\frac{g\lz q, \widetilde\chi_{l}\pz}{N(l)^w}\sum_{\substack{ k|l \\ k \odd}}\frac{\mu_{[i]}(k)}{N(k)^{z-w}} \\
=:& C_1(s,w,z)+C_2(s,w,z).
\end{split}
\end{align}

   We now apply \eqref{g2exp} to evaluate $g\lz q, \widetilde\chi_{l}\pz$ and apply \eqref{supprule} to detect whether $l$ is of type $1$ or not using the character sums $\frac 12 (\chi_1(l) \pm \chi_{i}(l))$. This allows us to recast $C_1(s,w,z)$ and $C_2(s,w,z)$ as
\begin{align}
\label{Cswz}
\begin{split}
  C_1(s,w,z)=& \sum_{q \neq 0}\frac{(-1)^{\Im (q)}}{N(q)^s}\cdot D(w,z-w,q; \chi_i)+\sum_{q \neq 0}\frac{(-1)^{\Re (q)}}{N(q)^s}\cdot D(w,z-w,q; \chi_1),\\
 C_2(s,w,z)=& \sum_{q \neq 0}\frac{(-1)^{\Im (q)}+(-1)^{\Re (q)}}{N(q)^s}D_1(w, z-w,q),
\end{split}
\end{align}
   where
\begin{equation*}
	D(w,t,q; \chi) := \sum_{\substack{l\odd}}\frac{\chi(l)g\lz q, l\pz a_{t}(l)}{N(l)^w}, \quad D_1(w,t,q):= \sum_{\substack{l\odd }}\frac{g\lz q, l^2 \pz a_t(l)}{N(l)^{2w}}.
\end{equation*}
  Here $a_t$ is the function defined earlier. \newline

  We have the following result concerning the analytic properties of $D(w,t,q; \chi)$ and $D_1(w,t,q)$.
\begin{lemma}
\label{Estimate For D(w,t)}
 The functions $D(w,t,q; \chi_1), D(w,t,q; \chi_i)$ and $D_1(w,t,q)$ have meromorphic continuations to the region
\begin{equation*}
		\{(w,t):\ \Re(w)>1,\ \Re(w+t)>1\},
\end{equation*}
	 except for a simple pole at $w=3/2$ when $q=i \square$ and $t\neq 0$ for $D(w,t,q; \chi_1)$ and a simple pole at $w=3/2$ when $q=\square$ and $t\neq 0$ for $D(w,t,q; \chi_i)$.
 Moreover, for $\Re(w)>1+\varepsilon$ and $\Re(t+w)>1+\varepsilon$, we have, away from the possible pole,
\begin{equation*}
			|D(w,t,q; \chi)|\ll|w(t+w)|^{\varepsilon} N(q)^{\max\{\varepsilon,\varepsilon-\Re(t)\}}.
\end{equation*}		
\end{lemma}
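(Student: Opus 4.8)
\emph{Plan.} All three series are Euler products in $l$: since $g(q,\cdot)$ is multiplicative over coprime primary moduli by Lemma~\ref{Gausssum}(i) and $\chi,a_t$ are multiplicative, I would write
\[
D(w,t,q;\chi)=\prod_{\varpi}\left(\sum_{j\ge 0}\frac{\chi(\varpi)^j\,g(q,\varpi^j)\,a_t(\varpi^j)}{N(\varpi)^{jw}}\right),
\]
the product running over primary primes, and similarly for $D_1$ with $g(q,\varpi^{2j})$ and $N(\varpi)^{2jw}$. The first task is to read the local factors off Lemma~\ref{Gausssum}(ii). For $\varpi\nmid q$ (so $h=0$) one has $g(q,\varpi)=\leg{iq}{\varpi}N(\varpi)^{1/2}$ and $g(q,\varpi^j)=0$ for $j\ge 2$, so only $j\in\{0,1\}$ survive and the local factor of $D$ equals $1+\chi(\varpi)\leg{iq}{\varpi}(1-N(\varpi)^{-t})N(\varpi)^{1/2-w}$. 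For $D_1$ the sums $g(q,\varpi^{2j})$ with $2j\ge 2=h+2$ all vanish, so the local factor is just $1$; hence $D_1(w,t,q)$ collapses to the \emph{finite} product over $\varpi\mid q$, is therefore entire in $(w,t)$, and is bounded by $N(q)^{\max\{\varepsilon,\varepsilon-\Re(t)\}}$ using $|a_t(\varpi^j)|\ll N(\varpi)^{\max(0,-\Re(t))}$ together with the divisor bound. This settles $D_1$ completely.

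For $D$ the key observation is that $\varpi\mapsto\chi(\varpi)\leg{iq}{\varpi}$ agrees at primes $\varpi\nmid 2q$ with a quadratic Hecke character $\xi=\xi_{\chi,q}$, whose conductor divides $2^5$ times the square-free part of the odd part of $q$ (so $N(\mathrm{cond}(\xi))\ll N(q)$); this follows from reciprocity \eqref{quadrec} and the supplementary laws \eqref{supprule}. Using $\xi(\varpi)^2=1$ for $\varpi\nmid 2q$, I would factor each local factor as
\[
1+\xi(\varpi)(1-N(\varpi)^{-t})N(\varpi)^{1/2-w}=\frac{1-\xi(\varpi)N(\varpi)^{1/2-w-t}}{1-\xi(\varpi)N(\varpi)^{1/2-w}}\left(1-\frac{N(\varpi)^{1-2w}(1-N(\varpi)^{-t})}{1-\xi(\varpi)N(\varpi)^{1/2-w-t}}\right),
\]
so that the first two factors collected over $\varpi\nmid 2q$ produce $L^{(2q)}(w-\tfrac12,\xi)/L^{(2q)}(w+t-\tfrac12,\xi)$, while the third factors form an Euler product $E(w,t,q)$ with general term $O(N(\varpi)^{1-2\Re(w)}+N(\varpi)^{1-2\Re(w)-\Re(t)})$. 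Since $\Re(w)>1$ and $\Re(w+t)>1$ together force $2\Re(w)+\Re(t)>2$, this product converges absolutely and is holomorphic and uniformly bounded on the target region (the denominators being bounded away from $0$ there). Altogether $D(w,t,q;\chi)=\dfrac{L^{(2q)}(w-1/2,\xi)}{L^{(2q)}(w+t-1/2,\xi)}\,E(w,t,q)\prod_{\varpi\mid q}(\cdots)$.

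The continuation and the poles then come from the theory of $L(\cdot,\xi)$. When $\xi$ is non-principal $L(\cdot,\xi)$ is entire, and $\xi$ is principal exactly when the square-free part of $iq$ (resp. of $q$) is trivial, i.e. when $q=i\square$ for $\chi=\chi_1$ (resp. $q=\square$ for $\chi=\chi_i$), matching the stated conditions; in that case $L^{(2q)}(w-\tfrac12,\xi)=\zeta_K(w-\tfrac12)\times(\text{finite factors})$ supplies the simple pole at $w=3/2$. The hypothesis $t\ne 0$ is transparent: at $t=0$ one has $a_0(\varpi^j)=0$ for $j\ge 1$ and the numerator and denominator $L$-factors coincide, so $D(w,0,q;\chi)\equiv 1$. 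Here I would invoke GRH to ensure that the denominator $L^{(2q)}(w+t-\tfrac12,\xi)$ is zero-free for $\Re(w+t-\tfrac12)>\tfrac12$, which is what forbids spurious poles of the quotient in $\Re(w+t)>1$.

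Finally, for the bound on $\Re(w)>1+\varepsilon$, $\Re(w+t)>1+\varepsilon$, I would combine the GRH (Lindel\"of) estimate $|L^{(2q)}(w-\tfrac12,\xi)|\ll(|w|N(q))^\varepsilon$, the bound $1/|L^{(2q)}(w+t-\tfrac12,\xi)|\ll(|w+t|N(q))^\varepsilon$ from \eqref{Lindelof}, the uniform bound $E(w,t,q)\ll 1$, and the local contribution $\prod_{\varpi\mid q}(\cdots)\ll N(q)^{\max\{\varepsilon,\varepsilon-\Re(t)\}}$, where the exponent $-\Re(t)$ again springs from $|a_t|\ll N(\varpi)^{\max(0,-\Re(t))}$. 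Multiplying yields $|D(w,t,q;\chi)|\ll|w(t+w)|^\varepsilon N(q)^{\max\{\varepsilon,\varepsilon-\Re(t)\}}$. The hardest part will be the bookkeeping for $\xi$—pinning down its principality via \eqref{supprule} so that the pole conditions emerge exactly as stated—and estimating the finitely many local factors at $\varpi\mid q$ uniformly in $q$; the single genuinely analytic input, as opposed to algebra, is the use of GRH to keep the quotient $L(w-\tfrac12,\xi)/L(w+t-\tfrac12,\xi)$ pole-free and of size $N(q)^\varepsilon$ throughout $\Re(w+t)>1$.
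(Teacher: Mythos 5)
Your proposal is correct and takes essentially the same route as the paper: expand $D(w,t,q;\chi)$ as an Euler product, evaluate the generic local factors via Lemma~\ref{Gausssum}, pull out a ratio of quadratic Hecke $L$-functions $L(w-\tfrac12,\cdot)/L(w+t-\tfrac12,\cdot)$ times an absolutely convergent Euler product, and read off the simple pole at $w=3/2$ from the principal-character cases $q=\square$ (for $\chi_i$) and $q=i\square$ (for $\chi_1$), with GRH giving zero-freeness of the denominator and the Lindel\"of-type bounds. The differences are only cosmetic: you use an exact local factorization identity and spell out the treatment of $D_1$, the role of $t\neq 0$, and the GRH input explicitly, whereas the paper factors with a big-$O$ remainder (additionally extracting $1/\zeta_K^{(2q)}(2w-1)$) and defers the final size estimate to \v Cech's Lemma 6.1 (2).
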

\begin{proof}
   As the situations are similar, we consider only the case $D(w,t,q; \chi_i)$ here.  We recast it as
\begin{align}
\label{Dexp}	
	D(w,t,q; \chi_i)=&\prod_{(\varpi, 2)=1} \lz\sum_{k=0}^\infty\frac{ \chi_i(\varpi^k)g\lz q, \varpi^k \pz a_t(\varpi^k)}{N(\varpi)^{kw}}\pz= P_{g}(w,t,q)P_{n}(w,t,q),
\end{align}
	where $P_{g}$ denotes the product over generic odd primes not dividing $q$ and $P_{n}$ denotes the rest.  Lemma \ref{Gausssum} gives
\begin{align}
\label{Pg}
\begin{split}
			P_{g}(w,t,q)=& \prod_{\varpi \nmid 2q}\Big (1+\frac{\leg{q}{\varpi}(1-N(\varpi)^{-t})}{N(\varpi)^{w-1/2}} \Big )=
L^{(2)}( w-\tfrac{1}{2},\chi_{q}) \prod_{\varpi \nmid 2q}\lz 1-\frac {1}{N(\varpi)^{2w-1}}-\frac{\leg{q}{\varpi}}{N(\varpi)^{w+t-1/2}}+\frac{1}{N(\varpi)^{2w+t-1}}\pz
\\
= & \frac {L^{(2)}( w-\tfrac{1}{2},\chi_{q})}{\zeta_K^{(2q)}(2w-1)}\prod_{\varpi \nmid 2q}\lz 1-\frac{\leg{q}{\varpi}}{N(\varpi)^{w+t-1/2}}+O \Big (\frac{1}{N(\varpi)^{2w+t-1}}+\frac{1}{N(\varpi)^{w+t-1/2+2w-1}}\Big ) \pz \\
=& \frac {L^{(2)}(w-\tfrac{1}{2},\chi_{q})}{\zeta_K^{(2q)}(2w-1)L\lz t+w-\tfrac{1}{2},\chi_{q}\pz}\prod_{\varpi \nmid 2q}\lz 1+O \Big (\frac{1}{N(\varpi)^{2w+t-1}}+\frac{1}{N(\varpi)^{3w+t-3/2}} + \frac{1}{N(\varpi)^{2w+2t-1}} \Big ) \pz.
\end{split}
\end{align}
	 The last Euler product is absolutely convergent and $\ll 1$ for $\Re(t+w)>1+\varepsilon,\ \Re(w)>1+\varepsilon,$. This completes the proof of the first assertion of the lemma.  The second assertion of the lemma here can be proved in a manner similar to \cite[Lemma 6.1 (2)]{Cech1}.
\end{proof}
	
  The above lemma now allows us to extend $C(s,w,z)$ to the region
\begin{equation*}
		\{(s,w,z):\ \Re(w)>1,\ \Re(z)>1,\ \Re(s)+\min\{0,\Re(z-w)\}>1\}.
\end{equation*}
	Using \eqref{A1A2}--\eqref{Functional equation in s} and the above, we can extend $(s-1)(w-1)(s+w-3/2)A(s,w,z)$ to the region
\begin{equation*}
		S_4=\{(s,w,z):\ \Re(s+2w)>1,\ \Re(w+z)>1, \ \Re(2w)>1, \ \Re(s+w)>1,\ \Re(s+z)>1,\ \Re(1-s)+\min\{0,\Re(z-w)\}>1\}.
\end{equation*}
   Note that the condition $\Re(s+2w)>1$ is superseded by $\Re(2w)>1$ and $\Re(s+w)>1$ so that we have
\begin{equation*}
		S_4=\{(s,w,z):\ \Re(w+z)>1, \ \Re(2w)>1, \ \Re(s+w)>1,\ \Re(s+z)>1,\ \Re(1-s)+\min\{0,\Re(z-w)\}>1\}.
\end{equation*}

	Note that the point $(1, 1/2, 1/2)$  is in the closure of $S_2$ and the point $(0, 1, 1)$ is in the closure of $S_4$. We then get that the convex hull of $S_2$ and $S_4$ contains
\begin{align*}
		S_5=\Bigg\{(s,w,z):\ &\Re(s+2w)>2,\ \Re(s+2z)>2, \Re(s+z)>1, \ \Re(s+w)>1, \ \Re(w)>\tfrac{1}{2}, \ \Re(z)>\tfrac{1}{2} \Bigg\}.
\end{align*}
	
Applying Theorem \ref{Bochner} yields that $(s-1)(w-1)(s+w-3/2)A(s,w,z)$ can be holomorphically continued to the region $S_5$.

\subsection{Residue of $A(s,w,z)$ at $s=3/2-w$}
	
   By Lemma \ref{Estimate For D(w,t)}, we see that $D(w,z-w,q; \chi_i)$ has a pole at $w=3/2$ when $q=\square$ and $z-w\neq0$. To compute the residue, we apply \eqref{Dexp} and \eqref{Pg} to see that when $q=\square$,
\begin{equation*}
		D(w,t,q; \chi_i)=L^{(2)}( w-1/2,\chi_{q})\prod_{\varpi \nmid 2q}\lz 1-\frac {1}{N(\varpi)^{2w-1}}-\frac{\leg{q}{\varpi}}{N(\varpi)^{w+t-1/2}}+\frac{1}{N(\varpi)^{2w+t-1}}\pz P_{n}(w,t,q).
\end{equation*}
	As the residue of $\zeta_K(s)$ at $s=1$ equals $\pi/4$, the residue at $w=3/2$ of $L\lz w-1/2,\chi_{q} \pz$ for $q=\square$ is
\begin{equation*}
		\frac {\pi}{4}\prod_{\varpi |2q}\big(1-\frac 1{N(\varpi)}\big).
\end{equation*}
	
	Also, if $q=\square$, we have by Lemma \ref{Gausssum} and a direct computation (similar to that in \cite[(6.46)]{Cech1}) that
\begin{align*}
\begin{split}
P_{n}\lz\tfrac32,z-\tfrac32,q \pz= \prod_{\substack{\varpi|q \\ \varpi \odd}}\lz1+\frac {1-N(\varpi)^{3/2-z}}{N(\varpi)}\pz.
\end{split}
\end{align*}
	
   We conclude from the above by another direct computation (with the help of the identity given in \cite[(6.51)]{Cech1}) that
\begin{align*}
\begin{split}
 & \res_{w=3/2}D(w,z-w,q=\square; \chi_i) = \frac {\pi}{4}\frac {P(z)}{\zeta_K(2)\zeta_K(z-\frac 12)}\frac{2^{z+1/2}}{3\cdot 2^{z-1/2}-2},
\end{split}
\end{align*}	
  where $P(z)$ is given in \eqref{Pz}. \newline

  A similar computation also reveals that
\begin{align}
\label{Resw}
\begin{split}
 & \res_{w=3/2}D(w,z-w,q=i\square; \chi_1) = \frac {\pi}{4}\frac {P(z)}{\zeta_K(2)\zeta_K(z-\frac 12)}\frac{2^{z+1/2}}{3\cdot 2^{z-1/2}-2}.
\end{split}
\end{align}	

  We further observe that for any perfect square $q^2$, $\Im (q^2) \equiv 0 \pmod 2$ and $\Re (iq^2) \equiv 0 \pmod 2$. It follows from this, \eqref{Resw} and \eqref{Cswz} that
\begin{align*}
\begin{split}
		&\res_{w=3/2}C_1(s,w,z)=\frac{  \pi P(z)}{\zeta_K(2)\zeta_K \lz z-\frac12\pz}\cdot\frac{2^{z+1/2}}{3\cdot 2^{z-1/2}-2}\cdot \zeta_K(2s).
\end{split}
\end{align*}	
	Note that the above is also valid in the case $z=w=3/2$ where there is no pole and the residue is $0$ since $1/\zeta_K(1)=0$. \newline
	
	We now apply \eqref{Functional equation in s} and \eqref{Cexp} to conclude that
\begin{align*}
\begin{split}
			&\res_{s=3/2-w}A(s,w,z)=\frac{\pi^{3-2w}\Gamma ( w-1/2)}{\Gamma (\tfrac{3}{2}-w)}\cdot\frac{P(\tfrac{3}{2}-w+z)\zeta_K(2w-1)}{\zeta_K(2)\zeta_K(1-w+z)}\cdot\frac{2^{z+w-3}}{3\cdot 2^{z+1-w}-2}.
\end{split}
\end{align*}
	Setting $w=1/2+\alpha$ and $z=1/2+\beta$,
\begin{align} \label{Aress}
\begin{split}
			&\res_{s=1-\alpha}A(s,1/2+\alpha,1/2+\beta) =\frac{\pi^{2-2\alpha}\Gamma (\alpha)}{\Gamma (1-\alpha)}\cdot\frac{P(3/2-\alpha+\beta)\zeta_K(2\alpha)}{\zeta_K(2)\zeta_K(1-\alpha+\beta)}\cdot\frac{2^{\alpha+\beta-2}}{3\cdot 2^{1-\alpha+\beta}-2}.
\end{split}
\end{align}
	Note that the functional equation \eqref{fneqnL} implies
\begin{align}
\label{zetafcneqn}
  \zeta_K(2\alpha)=\pi^{4\alpha-1}\frac {\Gamma(1-2\alpha)}{\Gamma (2\alpha)}\zeta_K(1-2\alpha).
\end{align}

   The above allows us to recast the expression in \eqref{Aress} as
\begin{align}
\label{Aress1}
\begin{split}
		&\res_{s=1-\alpha}A(s,1/2+\alpha,1/2+\beta) =\frac{\pi^{2\alpha+1}\Gamma(1-2\alpha)\Gamma (\alpha)}{\Gamma (1-\alpha)\Gamma (2\alpha)}\cdot\frac{P(3/2-\alpha+\beta)\zeta_K(1-2\alpha)}{\zeta_K(2)\zeta_K(1-\alpha+\beta)}\cdot\frac{2^{\alpha+\beta-2}}{3\cdot 2^{1-\alpha+\beta}-2}.
\end{split}
\end{align}

\subsection{Bounding $A(s,w, z)$ in vertical strips}
\label{Section bound in vertical strips}
	
In this section, we give estimations of $|A(s,w,z)|$ in vertical strips, which is needed in order to evaluate the integral in \eqref{Integral for all characters}. \newline
	
	For the previously defined regions $S_j$, we set for any fixed $0<\delta <1/1000$,
\begin{equation*}
		\tilde S_j=S_{j,\delta}\cap\{(s,w,z):\Re(s)>-5/2,\ \Re(w)>1/2-\delta\},
\end{equation*}
	where $S_{j,\delta}= \{ (s,w,z)+\delta (1,1,1) : (s,w,z) \in S_j \} $.  Set
\begin{equation*}
		p(s,w)=(s-1)(w-1)(s+w-3/2),
\end{equation*}
  so that $p(s,w)A(s,w,z)$ is an analytic function in the considered regions. We also write $\tilde p(s,w)=1+|p(s,w)|$. \newline

   We consider the expression for $A(s,w, z)$ given in \eqref{Abound} and apply \eqref{Lindelof}, getting that in the region $\tilde S_0$, we have under GRH
\begin{align*}
		|A(s,w, z)| \ll |z|^{\varepsilon}\sumstar_{\substack{n\odd}}\frac{L^{(2)}(w,\chi_n)}{N(n)^{s-\varepsilon}}, \; \mbox{for any} \; \varepsilon >0.
\end{align*}

We note the following bound from \cite[Theorem 5.19]{iwakow} concerning $|L(w,\chi_n)|$, which asserts that under GRH, we have for $\Re(w)\geq1/2$,
\begin{equation}
\label{Lindelof1}
		|(w-1)L(w,\chi_n)|\ll |w-1|(|w|N(n))^{\varepsilon}.
\end{equation}

Now the above bound implies that for $\Re(w) \geq 1/2$ and $\Re(s) \geq 1+2\varepsilon$,
\begin{align*}
\begin{split}
      |p(s,w)A(s,w,z)| \ll \tilde p(s,w)|wz|^{\varepsilon}.
\end{split}
\end{align*}

   On the other hand, we apply the functional equation in \eqref{fneqnL} for $L(w,\chi_n)$ for $\Re(w)<1/2$ and the estimate in \eqref{Stirlingratio} to see that when $\Re(w) < 1/2$ and $\Re(s+w) \geq 3/2+2\varepsilon$ for any $\varepsilon >0$,
\begin{align*}
\begin{split}
      |p(s,w)A(s,w,z)| \ll \tilde p(s,w)|wz|^{\varepsilon}(1+|w|)^{1-2\Re(w)+\varepsilon}.
\end{split}
\end{align*}

   We conclude that in $\tilde S_0$,
\begin{align*}
\begin{split}
      |p(s,w)A(s,w,z)| \ll \tilde p(s,w)|wz|^{\varepsilon}(1+|w|)^{\max \{0, 1-2\Re(w) \}+\varepsilon}.
\end{split}
\end{align*}

   Similarly, using \eqref{Sum A(s,w,z) over n}--\eqref{Lconvexbound}, we see that in the region $\tilde S_1$,
\begin{align*}
	|p(s,w)A(s,w,z)|\ll \tilde p(s,w)(1+|s|)^{\max \{0, 1-\Re(s), 1-2\Re(s)\}+\varepsilon}.
\end{align*}
  Using Proposition \ref{Extending inequalities}, we obtain that in the convex hulls of $\tilde S_2$, $\tilde S_0$ and $\tilde S_1$, we have
\begin{equation}
\label{AboundS2}
		|p(s,w)A(s,w,z)|\ll \tilde p(s,w) |wz|^{\varepsilon}(1+|w|)^{\max \{0, 1-2\Re(w) \}+\varepsilon}(1+|s|)^{6+\varepsilon}.
\end{equation}

   Moreover, by \eqref{A1swz}, the convexity bound given in \eqref{Lconvexbound} for $\zeta_K(s)$, the functional equation given in \eqref{fneqnL} for $\zeta_K(s)$ for $\Re(s)<1/2$ and \eqref{Stirlingratio} that in the region $\tilde S_3$, we have
\begin{align}
\label{A1bound}
		|A_1(s,w,z)| \ll (1+|s|)^{\max \{0, 1-\Re(s), 1-2\Re(s)\}+\varepsilon}.
\end{align}

   Also, by \eqref{Cexp}, \eqref{Cswz} and Lemma \ref{Estimate For D(w,t)}, we have
\begin{equation}
\label{Csbound}
		|(w-3/2)C(s,w,z)|\ll (1+|w-3/2|)|wz|^{\varepsilon}
\end{equation}
   in the region
\begin{equation*}
		\{(s,w,z):\Re(w)>1+\varepsilon,\ \Re(z)>1+\varepsilon,\ \Re(s)+\min\{0,\Re(z-w)>1+\varepsilon\},
\end{equation*}

  Now,  applying \eqref{A1A2}, the functional equation \eqref{Functional equation in s}, \eqref{Stirlingratio} and the bounds given in \eqref{A1bound}, \eqref{Csbound}, we obtain that in the region $\tilde S_3$,
\begin{equation}
\label{AboundS3}
		|p(s,w)A(s,w,z)|\ll \tilde p(s,w) |wz|^{\varepsilon}(1+|s|)^{6+\varepsilon}.
\end{equation}

  Note that we have $\Re(w)>1/2$ in the convex hulls of $\tilde S_4$, $\tilde S_2$ and $\tilde S_3$. We then conclude from \eqref{AboundS2},  \eqref{A1bound}, \eqref{AboundS3} and Proposition \ref{Extending inequalities} that in $\tilde S_4$,
\begin{equation}
\label{Abound1}
		|p(s,w)A(s,w,z)|\ll \tilde p(s,w)|wz|^{\varepsilon}(1+|s|)^{6+\varepsilon}.
\end{equation}
	
\subsection{Completion of proof}

We then shift the integral in \eqref{Integral for all characters} to $\Re(s)=N(\alpha,\beta)+\varepsilon$,  where $N(\alpha,\beta)$ is given in \eqref{Nab}. We encounter two simple poles at $s=1$ and  $s=1-\alpha$ in this process, with the corresponding residues being given in \eqref{Residue at s=1} and \eqref{Aress1}. These give the main terms in \eqref{Asymptotic for ratios of all characters}. \newline

  To estimate the integral on the new line, we use \eqref{Abound1} and obtain that on this line
\begin{equation}
\label{Bound in vertical strips}
		|A(s,w,z)|\ll|wz|^{\varepsilon}(1+|s|)^{6+\varepsilon}.
\end{equation}
    Moreover, note that integration by parts implies that for any integer $E \geq 0$,
\begin{align}
\label{whatbound}
 \hat w(s)  \ll  \frac{1}{(1+|s|)^{E}}.
\end{align}
	
   We apply this with \eqref{Bound in vertical strips} to get that the integral on the new line is bounded by the error term given in \eqref{Asymptotic for ratios of all characters}. This completes the proof of Theorem \ref{Theorem for all characters}.
	
\section{Proof of Theorem \ref{Thmfirstmoment}}
\label{sec4}

  The proof of Theorem \ref{Thmfirstmoment} is similar to that of Theorem \ref{Theorem for all characters}.  We shall therefore be brief at places where the arguments are parallel.  The Mellin inversion yields
\begin{equation}
\label{Integral for first moment}
		\sum_{\substack{n\odd}}L( \tfrac{1}{2}+\alpha,\chi_{(1+i)^2n})w \bfrac {N(n)}X=\frac1{2\pi i}\int\limits_{(2)}A\lz s,\tfrac12+\alpha \pz X^s\widehat w(s) \dif s,
\end{equation}
   where
\begin{align}
\label{Aswexp}
\begin{split}
A(s,w)=& \sum_{\substack{n\odd}}\frac{L(w,\chi_n)}{N(n)^s}.
\end{split}
\end{align}

   The above representation implies that $A(s,w)$ can be regarded as the case $z \rightarrow \infty$ of $A(s,w,z)$ defined by the first equality in \eqref{Aswzexp}.
In particular, the function $(s-1)(w-1)A(s,w)$ continues holomorphically in the resulting region by take $z \rightarrow \infty$ in the definition of $S_2$ in \eqref{Region of convergence of A(s,w,z)}, namely,
\begin{equation}
\label{S1}
		\mathcal S_1=\{(s,w): \Re(s+w)>3/2 \}.
\end{equation}
  Note that we do not need to assume GRH in the above process, as one checks that GRH is only assumed in the estimate in \eqref{Lindelof}, which in turn is applied to bound $1/L^{(2)}(z, \chi_n)$ in \eqref{Abound}, a term not present here. \newline

  On the other hand, we deduce also from \eqref{Aswexp} that
\begin{align}
\label{Aswexp1}
\begin{split}
A(s,w)=& \sum_{\substack{m\odd}}\frac{L\lz s,\widetilde\chi_{m}\pz}{N(m)^w}.
\end{split}
\end{align}

   The above representation implies that $A(s,w)$ can be regarded as the case $k=1$ of $A(s,w,z)$ defined by the last equality in \eqref{Sum A(s,w,z) over n}. We then deduce from the case $k=1$ of \eqref{A1A2} that
\begin{align}
\begin{split}
\label{Asw12}
 A(s,w) =&   \sum_{\substack{m\odd \\ m =  \square}}\frac{\zeta_K(s)\prod_{\varpi | 2m}(1-N(\varpi)^{-s}) }{N(m)^w} +\sum_{\substack{m\odd \\ m \neq \square}}\frac{L\lz s,\widetilde \chi_{m}\pz}{N(m)^w} =: A_1(s,w)+A_2(s,w).
\end{split}
\end{align}

   Here
\begin{align*}
\begin{split}
 A_1(s,w) =& \zeta^{(2)}_K(s) \prod_{(\varpi, 2)=1}\Big (1+ \frac 1{N(\varpi)^{2w}}(1-N(\varpi)^{-s})(1-N(\varpi)^{-2w})^{-1}\Big ).
\end{split}
\end{align*}
  Except for a simple pole at $s=1$, $A_1(s,w)$ is holomorphic in the region
\begin{align}
\label{S2-1}
		\mathcal S_2=\Bigg\{(s,w):\ &\Re(s+2w)>1,\ \Re(2w)>1 \Bigg\}.
\end{align}

Now the functional equation \eqref{fcneqnallchi} for $L\lz s,\widetilde \chi_{m}\pz$ in the case $m \neq \square$ leads to
\begin{align}
\begin{split}
\label{A2C}
 A_2(s,w) =\frac{4^{-s}\pi^{2s-1}\Gamma (1-s)}{4 \Gamma(s) } C(1-s,s+w),
\end{split}
\end{align}
  where $C(s,w)$ is given by
\begin{align}
\label{Cdecomp}
\begin{split}
		C(s,w)=& \sum_{\substack{q \neq 0 \\m \odd }}\frac{g(q, \widetilde \chi_{m})}{N(q)^sN(m)^w}-\sum_{\substack{q \neq 0 \\m \odd \\ m=\square }}\frac{g(q, \widetilde \chi_{m})}{N(q)^sN(m)^w} \\
=& \sum_{q \neq 0}\frac{1}{N(q)^s}\sum_{\substack{m \odd }}\frac{g\lz q, \widetilde\chi_{m}\pz}{N(m)^w}-\sum_{q \neq 0}\frac{1}{N(q)^s}\sum_{\substack{m \odd \\ m = \square}}\frac{g\lz q, \widetilde\chi_{m}\pz}{N(m)^w} =: C_1(s,w)+C_2(s,w).
\end{split}
\end{align}	

 By \eqref{S1}, \eqref{S2-1} and the functional equation \eqref{A2C}, $C(s,w)$ is initially defined in the region
\begin{equation*}
		\{(s,w):\ \Re(s+2w)>2, \ \Re(w)>\tfrac{3}{2} , \ \Re (s+w) > \tfrac{3}{2} \}.
\end{equation*}
	To extend this region, we apply \eqref{g2exp} to evaluate $g\lz q, \widetilde\chi_{m}\pz$ and apply \eqref{supprule} to detect whether $m$ is of type $1$ or not using the character sums $\frac 12 (\chi_1(m) \pm \chi_{i}(m))$. This allows us to recast $C_1(s,w)$ and $C_2(s,w)$ as
\begin{align}
\label{Csw}
  C_1(s,w)= \sum_{q \neq 0}\frac{(-1)^{\Im (q)}}{N(q)^s} D(w,q; \chi_i)+\sum_{q \neq 0}\frac{(-1)^{\Re (q)}}{N(q)^s} D(w,q; \chi_1) \; \mbox{and} \;
 C_2(s,w)= \sum_{q \neq 0}\frac{(-1)^{\Im (q)}+(-1)^{\Re (q)}}{N(q)^s}D_1(w,q),
\end{align}
   where
\begin{equation*}
	D(w, q; \chi) := \sum_{\substack{m \odd}}\frac{\chi(m)g\lz q, m \pz }{N(m)^w} \quad \mbox{and} \quad D_1(w, q):= \sum_{\substack{m \odd }}\frac{g\lz q, m^2 \pz }{N(m)^{2w}}.
\end{equation*}

  We have the following result concerning the analytical property of $D(w, q; \chi)$ and $D_1(w, q)$.
\begin{lemma}
\label{Estimate For D(w)}
 The functions $D(w, q; \chi_1)$, $D(w, q; \chi_i)$ and $D_1(w, q)$ have meromorphic continuations to the region
\begin{equation*}
		\{w :\ \Re(w)>1 \},
\end{equation*}
	 except for a simple pole at $w=3/2$ if $q=i\square$ for $D(w, q; \chi_1)$ and a simple pole at $w=3/2$ if $q=\square$ for $D(w, q; \chi_i)$.
 Moreover, for $\Re(w)>1+\varepsilon$, we have, away from the possible pole,
\begin{equation*}
			|D(w,q,\chi)|\ll \big ( (1+|w|^2)N(q)\big )^{\max ((3/2-\Re(w))/2, 0)+\varepsilon}.
\end{equation*}		
\end{lemma}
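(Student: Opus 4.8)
The plan is to follow the proof of Lemma~\ref{Estimate For D(w,t)}, of which the present lemma is essentially the specialization obtained by letting the arithmetic factor $a_t$ degenerate to the constant $1$ (formally $t\to\infty$); the absence of $a_t$ in fact streamlines the local computations. Since the three cases are analogous, I would treat $D(w,q;\chi_i)$ in detail and indicate the changes for $D(w,q;\chi_1)$ and $D_1(w,q)$. First I would expand
\begin{equation*}
 D(w,q;\chi_i) = \prod_{(\varpi,2)=1}\lz \sum_{k=0}^{\infty}\frac{\chi_i(\varpi^k)g\lz q,\varpi^k\pz}{N(\varpi)^{kw}}\pz = P_g(w,q)P_n(w,q),
\end{equation*}
where $P_g$ runs over the generic odd primes $\varpi\nmid 2q$ and $P_n$ over the odd primes $\varpi\mid q$. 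This product converges absolutely for $\Re(w)>3/2$, matching the region of convergence of the defining Dirichlet series since $g(q,m)\ll N(m)^{1/2+\varepsilon}$.

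Next I would evaluate $P_g$ via Lemma~\ref{Gausssum}. For $\varpi\nmid 2q$ one has $h=0$, so $g(q,\varpi^k)$ vanishes for $k\geq 2$ and equals $\leg{iq}{\varpi}N(\varpi)^{1/2}$ for $k=1$; together with $\chi_i(\varpi)=\leg{i}{\varpi}$ and the identity $\leg{i}{\varpi}^2=1$ (so that $\leg{i}{\varpi}\leg{iq}{\varpi}=\leg{q}{\varpi}$), the local factor is exactly $1+\leg{q}{\varpi}N(\varpi)^{-(w-1/2)}$, with no truncation error. The elementary identity $(1+x)=(1-x^2)(1-x)^{-1}$ then gives the closed form
\begin{equation*}
 P_g(w,q) = \prod_{\varpi \nmid 2q}\frac{1 - N(\varpi)^{-(2w-1)}}{1 - \leg{q}{\varpi}N(\varpi)^{-(w-1/2)}} = \frac{L^{(2)}(w-\tfrac12,\chi_q)}{\zeta_K^{(2q)}(2w-1)},
\end{equation*}
free of the convergent correction product appearing in \eqref{Pg}. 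Since $L(w-\tfrac12,\chi_q)$ continues to an entire function when $\chi_q$ is non-principal and has the pole of $\zeta_K^{(2)}(w-\tfrac12)$ when $\chi_q$ is principal, since $1/\zeta_K^{(2q)}(2w-1)$ is holomorphic and non-vanishing for $\Re(w)>1$, and since $P_n$ is a finite product over $\varpi\mid q$ and hence entire, this furnishes the meromorphic continuation to $\Re(w)>1$ with at most a simple pole at $w=3/2$. That pole occurs precisely when $\chi_q$ is principal, i.e. when $q=\square$. For $D(w,q;\chi_1)$ the same computation replaces $\leg{q}{\varpi}$ by $\leg{iq}{\varpi}$, so the relevant factor is $L(w-\tfrac12,\chi_{iq})$; using $-1=i^2=\square$, the symbol $\chi_{iq}$ is principal exactly when $q=i\square$, which locates that pole. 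For $D_1(w,q)$ the Gauss sum $g(q,\varpi^{2k})$ vanishes for every $\varpi\nmid q$ and $k\geq 1$, so the generic part is trivial and $D_1(w,q)$ reduces to a finite product over $\varpi\mid q$; it is therefore entire, explaining the absence of a pole.

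For the quantitative bound I would estimate the three ingredients of $P_gP_n$ separately on $\Re(w)>1+\varepsilon$, away from the pole. The convexity bound \eqref{Lconvexbound}, applied to $L(w-\tfrac12,\chi_q)$ with $s=w-\tfrac12$, yields $|L(w-\tfrac12,\chi_q)|\ll\lz N(q)(1+|w|^2)\pz^{\max\{(3/2-\Re(w))/2,\,0\}+\varepsilon}$, which already produces the exponent in the claimed estimate. The factor $\zeta_K^{(2q)}(2w-1)^{-1}$ is $\ll N(q)^{\varepsilon}$ there, since $\Re(2w-1)>1$ keeps $\zeta_K(2w-1)$ bounded away from zero while the finitely many removed Euler factors contribute at most $N(q)^{\varepsilon}$. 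Finally, Lemma~\ref{Gausssum} shows each local factor of $P_n$ is $1+O(N(\varpi)^{-\delta})$ for some $\delta>0$ when $\Re(w)>1+\varepsilon$, so the standard bound $\omega_{[i]}(q)\ll \log N(q)/\log\log N(q)$ gives $P_n\ll 2^{\omega_{[i]}(q)}\ll N(q)^{\varepsilon}$. Multiplying the three estimates gives the stated bound, just as in \cite[Lemma~6.1 (2)]{Cech1}; note that only the convexity bound, not GRH, is used.

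I expect the main technical obstacle to be the uniform control of the ramified Euler factors $P_n$ over $\varpi\mid q$: one must verify, across the five branches of Lemma~\ref{Gausssum}, that the largest surviving term (from $k=h+1$) is genuinely $O(N(\varpi)^{-\delta})$ for $\Re(w)>1+\varepsilon$, so that $P_n$ is absorbed into $N(q)^{\varepsilon}$ without disturbing the convexity exponent. The remaining bookkeeping, in particular the identification of the pole loci $q=\square$ and $q=i\square$ through the quadratic-symbol identities $\leg{i}{\varpi}^2=1$ and $-1=\square$, is routine.
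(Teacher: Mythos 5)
Your proposal is correct and takes essentially the same route as the paper's own proof: the identical Euler-product factorization $D(w,q;\chi_i)=\frac{L^{(2)}(w-\tfrac12,\chi_q)}{\zeta_K^{(2q)}(2w-1)}\,P_n(w,q)$ (this is exactly the paper's \eqref{Dexp1}), the same identification of the pole loci $q=\square$ resp.\ $q=i\square$ through principality of $\chi_q$ resp.\ $\chi_{iq}$, and the same combination of the convexity bound \eqref{Lconvexbound} with $N(q)^{\varepsilon}$ estimates for the ramified product over $\varpi\mid q$ and the removed zeta factors. Your added details (the exact vanishing of $g(q,\varpi^k)$ for $k\geq 2$ at generic primes, the entirety of $D_1(w,q)$ as a finite Euler product, and the uniform $1+O(N(\varpi)^{-\delta})$ control of $P_n$) are precisely the points the paper leaves implicit, so there is no substantive difference in method.
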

\begin{proof}
   As the situations are similar, we give the details only for the case $D(w, q; \chi_i)$ here.  Recasting it as
\begin{align}
\label{Dexp1}	
	D(w, q; \chi_i)=& \frac {L^{(2)}( w-1/2,\chi_{q})}{\zeta_K(2w-1)}\prod_{\substack{\varpi|2q }}\lz1-\frac 1{N(\varpi)^{2w-1}} \pz^{-1} \prod_{\substack{\varpi^a \| q \\ \varpi \odd}}\lz1+\sum_{k=1}^{\lfloor \frac a 2\rfloor}\frac{\varphi_{[i]}(\varpi^{2k})}{N(\varpi)^{2kw}}+
\frac{\chi_i(\varpi^{a+1})g\lz q, \varpi^{a+1} \pz }{N(\varpi)^{(a+1)w}}\pz ,
\end{align}
the first assertion of the lemma readily follows. \newline

    We further note that when $\Re(w)>1$,
\begin{align*}
\begin{split}
    & \prod_{\varpi|2q}\lz1-\frac 1{N(\varpi)^{2w-1}} \pz^{-1} \ll  N(q)^{\varepsilon}, \\
	& \prod_{\substack{\varpi^a \| q \\ \varpi \odd }}\lz1+\sum_{k=1}^{\lfloor \frac a 2\rfloor}\frac{\varphi_{[i]}(\varpi^{2k})}{N(\varpi)^{2kw}}+
\frac{\chi_i(\varpi^{a+1})g\lz q, \varpi^{a+1} \pz }{N(\varpi)^{(a+1)w}}\pz \ll  \prod_{\varpi |q}\lz1+\sum_{k=1}^\infty N(\varpi)^{2k(1-w)}\pz \ll N(q)^{\varepsilon}.
\end{split}
\end{align*}

   Applying the above together with the estimations given in \eqref{Lconvexbound} to bound $L\lz w-1/2,\chi_{q}\pz$, we see that the second assertion of the lemma follows. This completes the proof.
\end{proof}
	
  The above lemma now allows us to extend $C(s,w)$ to the region
\begin{equation*}
		\{(s,w):\ \Re(w)>1,\ \Re(s)+\min \{0, (\Re(w)-3/2)/2 \} >1\}.
\end{equation*}
	Using \eqref{Asw12}--\eqref{Cdecomp} and the above, we can extend $(s-1)(w-1)(s+w-3/2)A(s,w)$ to the region
\begin{equation*}
		\mathcal S_3=\{(s,w): \ \Re(2w)>1, \ \Re(s+w)>1, \ \Re(1-s)+\min \{0, (\Re(s+w)-3/2)/2 \}>1\}.
\end{equation*}
	The convex hull of $\mathcal S_1$ and $\mathcal S_3$ contains
\begin{align*}
	\mathcal S_4=\Bigg\{(s,w): \ \Re(s+w)>1 \Bigg\}.
\end{align*}
	
Now Theorem \ref{Bochner} implies that $(s-1)(w-1)(s+w-3/2)A(s,w)$ can be continued holomorphically to the region $\mathcal S_3$.

\subsection{Residue of $A(s,w)$ at $s=1$}
	
	We see from \eqref{Aswexp1} that $A(s,w)$ has a pole at $s=1$ coming from the summands with $m=\square$.  In this case, we have
\begin{equation*}
  \res_{s=1}A(s,w)=\frac {\pi}{8}\sum_{\substack{m=\square \\m\odd}}\frac{a_1(m)}{N(m)^w},
\end{equation*}
   where we recall that $a_1(n)$ denotes the multiplicative function with $a_1(\varpi^k)=1-1/N(\varpi)$. \newline

	Writing the last sum above as an Euler product, we get
\begin{align*}
\begin{split}
 \res_{s=1}A(s,w)=	&\frac {\pi \zeta_K^{(2)}(2w)}{8\zeta_K^{(2)}(2w+1)}.
\end{split}
\end{align*}

	Setting $w=1/2+\alpha$ leads to
\begin{align}
\label{ResidueAsw at s=1}
  \res_{s=1}& A(s,1/2+\alpha) =\frac{\pi \zeta_K^{(2)}(1+2\alpha)}{8\zeta_K^{(2)}(2+2\alpha)}.
\end{align}
	
\subsection{Residue of $A(s,w)$ at $s=3/2-w$}
	
   By Lemma \ref{Estimate For D(w)}, $D(w, q; \chi_i)$ has a pole at $w=3/2$ if $q=\square$. To compute the residue, \eqref{Dexp1} gives that if $q=\square$, then
\begin{equation*}
		 \frac {D(w,q; \chi_i) \zeta_K(2w-1)}{L^{(2)}( w-1/2,\chi_{q})} \Bigg |_{w=\frac 32}= \prod_{\varpi|2q}\lz1-\frac 1{N(\varpi)^{2}} \pz^{-1}\prod_{\substack{\varpi|q \\ \varpi \odd}}\lz1+\frac {1}{N(\varpi)}\pz .
\end{equation*}

   It follows that
\begin{align*}
\begin{split}
 & \res_{w=3/2}D(w,q=\square; \chi_i)=\frac {\pi}{6\zeta_K(2)}.
\end{split}
\end{align*}	
 Similarly,
\begin{align*}
\begin{split}
 & \res_{w=3/2}D(w,q=i\square; \chi_1) = \frac {\pi}{6\zeta_K(2)}.
\end{split}
\end{align*}	

  We deduce from this and \eqref{Csw} together with the observation that $\Im (q^2) \equiv 0 \pmod 2$ and $\Re (iq^2) \equiv 0 \pmod 2$ that
\begin{align*}
\begin{split}
		&\res_{w=3/2}C_1(s,w)=\frac{ 2 \pi }{3 \zeta_K(2)}\zeta_K(2s).
\end{split}
\end{align*}	
	
	We now apply \eqref{Asw12}, \eqref{A2C} and \eqref{Cdecomp} to conclude that
\begin{align*}
\begin{split}
			&\res_{s=3/2-w}A(s,w)=\frac{2^{2w-1}\pi^{3-2w}\Gamma ( w-1/2)}{6\Gamma (3/2-w)}\cdot\frac{\zeta_K(2w-1)}{\zeta_K(2)}.
\end{split}
\end{align*}
	Substituting $w=1/2+\alpha$ yields that
\begin{align}
\label{Asres}
\begin{split}
			&\res_{s=1-\alpha}A(s,1/2+\alpha) =\frac{2^{2\alpha-1}\pi^{2-2\alpha}\Gamma (\alpha)}{3\Gamma (1-\alpha)}\cdot\frac{\zeta_K(2\alpha)}{\zeta_K(2)}.
\end{split}
\end{align}
	We then apply the functional equation \eqref{zetafcneqn} to recast the expression in \eqref{Asres} as
\begin{align}
\label{Asres1}
\begin{split}
		&\res_{s=1-\alpha}A(s,1/2+\alpha) =\frac{2^{2\alpha-1} \pi^{2\alpha+1}\Gamma(1-2\alpha)\Gamma (\alpha)}{3\Gamma (1-\alpha)\Gamma (2\alpha)}\cdot\frac{\zeta_K(1-2\alpha)}{\zeta_K(2)}.
\end{split}
\end{align}
	
\subsection{Bounding $A(s,w)$ in vertical strips}
\label{Section $A(s,w)$ bound in vertical strips}
	
	We may bound $|A(s,w)|$ in vertical strips following the treatments in Section \ref{Section bound in vertical strips}. Similar to the notations introduced there, we set for the previously defined regions $\mathcal S_j$ and any fixed $0<\delta <1/1000$,
\begin{equation*}
		\widetilde{\mathcal S}_j=\mathcal S_{j,\delta}\cap\{(s,w):\Re(s)>-5/2,\ \Re(w)>1/2-\delta \},
\end{equation*}
	where $\mathcal S_{j,\delta}= \{ (s,w) +\delta (1,1) : (s,w) \in \mathcal S_j \}$. \newline

Replacing the bound in \eqref{Lindelof1} by the one in \eqref{L1est}, gives that unconditionally in $\widetilde{\mathcal S}_2$,
\begin{equation*}
		|p(s,w)A(s,w)|\ll \tilde p(s,w) |wz|^{\varepsilon}(1+|w|)^{\max \{0, 1-2\Re(w) \}+1/2+\varepsilon}(1+|s|)^{6+\varepsilon}.
\end{equation*}

	 By \eqref{Cdecomp}, \eqref{Csw} and Lemma \ref{Estimate For D(w)}, we have
\begin{equation}
\label{Cswbound}
		|(w-3/2)C(s,w)|\ll (1+|w-3/2|) \big ( (1+|w|^2) \big )^{\max ((3/2-\Re(w))/2, 0)+\varepsilon}.
\end{equation}
  in the region
\begin{equation*}
		\{(s,w):\Re(w)>1+\varepsilon,\  \Re(s)+\min\{0,(\Re(w)-3/2)/2 \}>1+\varepsilon\}.
\end{equation*}

    Note also that similar to \eqref{A1bound}, in the region $\widetilde {\mathcal S}_2$, we have
\begin{align}
\label{A1swbound}
		|A_1(s,w)| \ll (1+|s|)^{\max \{0, 1-\Re(s), 1-2\Re(s)\}+\varepsilon}.
\end{align}

   It follows from \eqref{Stirlingratio}, \eqref{Asw12}, \eqref{A2C}, \eqref{Cswbound} and \eqref{A1swbound} that in the region $\widetilde {\mathcal S}_3$,
\begin{align*}
\begin{split}
		|p(s,w)A(s,w)|\ll &  \tilde p(s,w) (1+|s|)^{6+\varepsilon}\big ( (1+|w+s|^2) \big )^{\max ((3/2-\Re(w+s))/2, 0)+\varepsilon} \\
\ll & \tilde p(s,w) (1+|s|)^{6+\varepsilon}\big ( (1+|w|^2)(1+|s|^2) \big )^{\max ((3/2-\Re(w+s))/2, 0)+\varepsilon} \\
\ll &  \tilde p(s,w) (1+|s|)^{7+\varepsilon}(1+|w|)^{1/2+\varepsilon},
\end{split}
\end{align*}
as $\Re(w+s)>1$ in $\widetilde {\mathcal S}_3$. \newline

 We then conclude by Proposition \ref{Extending inequalities} that in the convex hull $\widetilde {\mathcal S}_4$ of $\widetilde {\mathcal S}_1$ and $\widetilde {\mathcal S}_4$, we have
\begin{equation}
\label{Aswverticalbounds}
		|p(s,w)A(s,w)|\ll \tilde p(s,w)(1+|s|)^{7+\varepsilon}(1+|w|)^{1/2+\varepsilon}.
\end{equation}

\subsection{Conclusion}

We then shift the integral in \eqref{Integral for first moment} to $\Re(s)=1/2+\varepsilon$ to encounter two simple poles at $s=1$ and  $s=1-\alpha$ in the process, with the corresponding residues given in \eqref{ResidueAsw at s=1} and \eqref{Asres1}. These give the main terms in \eqref{Asymptotic for first moment}. We then apply \eqref{whatbound} together with the bound given in \eqref{Aswverticalbounds} to infer that the integral on the new line is bounded by the error term given in \eqref{Asymptotic for first moment}. This completes the proof of Theorem \ref{Thmfirstmoment}.

\vspace*{.5cm}

\noindent{\bf Acknowledgments.}   P. G. is supported in part by NSFC grant 11871082 and L. Z. by the Faculty Silverstar Grant PS65447 at the University of New South Wales.  The authors would also like to thank the anonymous referee for his/her very careful inspection of the paper and many helpful suggestions.

\bibliography{biblio}
\bibliographystyle{amsxport}

\end{document}